\documentclass[makeidx]{amsart}
\usepackage[active]{srcltx}
\usepackage{enumerate}
\usepackage{epsfig}
\usepackage{hyperref}
\usepackage{amsmath}
\usepackage{amssymb}
\usepackage[usenames,dvipsnames]{color}
\newtheorem{Proposition}{Proposition}

  \newtheorem{Lemma}[Proposition]{Lemma}
  
  \newtheorem{Theorem}[Proposition]{Theorem}
 
 \newtheorem{Definition}[Proposition]{Definition}
\newtheorem*{Proposition*}{Proposition}

 \newtheorem{Note}[Proposition]{Note}

\newcommand {\z}{{\noindent}}
\def\bchi{\raisebox{0.1cm}{\scalebox{1.3}{$\chi$}}}

\def\XXint#1#2#3{{\setbox0=\hbox{$#1{#2#3}{\int}$}
     \vcenter{\hbox{$#2#3$}}\kern-.5\wd0}}

\def\({\left(}
\def\){\right)}
\def\ge{\geqslant}
\def\le{\leqslant}
\def\epsilon{\varepsilon}
\def\DD{\mathbb{D}}
 
\def\CC{\mathbb{C}}
 \def\RR{\mathbb{R}}
 \def\NN{\mathbb{N}}
\def\ZZ{\mathbb{Z}}
\def\QQ{\mathbb{Q}}
\def\Re{\mathrm{Re\,\,}}

\makeindex

\title{Foundational aspects of singular integrals} 
\author {Ovidiu Costin}
\address{Mathematics Department\\The Ohio State University\\231 w 18th Ave\\Columbus 43210\\costin@math.ohio-state.edu, corresponding author.}
\author {Harvey M. Friedman}
\address{Distinguished University Professor of Mathematics, Philosophy, and  
Computer Science, The Ohio State University, Emeritus. Mathematics Department\\The Ohio State University\\231 w 18th Ave\\Columbus 43210\\friedman@math.ohio-state.edu}
\begin{document}
\maketitle
\begin{abstract} We investigate  integration of classes of real-valued continuous functions on (0,1]. Of course difficulties arise if there is a non-$L^1$ element in the class, and the Hadamard finite part integral ({\em p.f.}) does not apply. Such singular integrals arise naturally in many contexts including PDEs and singular ODEs.

The Lebesgue  integral as well as   $p.f.$, starting at zero, obey  two fundamental conditions: (i) they act as  antiderivatives and, (ii) if $f =g$  on $(0,a)$, then their integrals  from $0$ to $x$ coincide for any $x\in  (0,a)$.

We find that integrals from zero with  the essential properties of $p.f.$, plus positivity, exist by virtue of the Axiom of Choice (AC) on all functions on $(0,1]$ which are $L^1((\epsilon,1])$ for all $\epsilon>0$. However, this existence proof does not provide a satisfactory construction. Without some regularity at $0$, the existence of general antiderivatives which satisfy only (i) and (ii) above on classes with a non-$L^1$ element is independent of ZF 
(the usual ZFC axioms for mathematics without
AC), and even of ZFDC (ZF with the Axiom of  
Dependent
Choice).  Moreover we show that there is no mathematical
description that can be proved (within ZFC or even extensions of
ZFC with large cardinal hypotheses) to uniquely define such an antiderivative operator.

Such results are precisely  
formulated for a variety of sets of functions, and
proved using methods from mathematical logic, descriptive set theory  
and analysis. We also analyze $p.f.$ on analytic functions in the punctured unit disk, and make the connection to singular initial value problems.

\smallskip
\noindent \textbf{Keywords.} Hadamard finite part, singular integrals, regularization, ZFC, Borel, Baire, independent, Borel measurable.

\smallskip
\noindent \textbf{MSC classification numbers:}  32A55, 03E15, 03E25, 03E35, 03E75.
\end{abstract}

\section{Introduction}
In this paper we investigate integration of classes of real-valued continuous functions on (0,1] and of analytic functions in the punctured unit disk.

Integrals of functions which are singular in the interior of the interval of integration are  relatively well understood.  A notable example is the Hilbert transform $(Hf)(x)=\pi^{-1}\int_{-\infty}^\infty f(s)(s-x)^{-1}ds$ where the integrand is $L^1$ for large $s$. Evidently, the integrand is in $L^1(\RR)$ iff $f$ vanishes identically  and in general $Hf$  needs an interpretation.   Defined as a Cauchy principal value integral, the domain of $H$ is a set of H\"older continuous functions \cite{Musk}. However, by a substantial argument, regularity is {\em not needed}  to ensure that a natural {\em extension}  of  $Hf$  exist: $H$  is a bounded  operator on $L^p$ for any $p\in (1,\infty)$. By a theorem of Titchmarsh, for $f$ in $L^p$ as above, $H f$ exists pointwise everywhere  \cite{Titchmarsh}. $H$ also extends as a bounded operator from $L^1$ into weak-$L^1$ \cite{SteinWeiss}. The extension preserves the essential properties of $H$.

In contrast, only sufficient conditions are known for the existence one-sided singular integrals such as 
\begin{equation}\label{eq:defH2}
  \Gamma(\alpha)J^\alpha: =\int_0^x s^{\alpha-1} f(s)ds
\end{equation}
with $f$ bounded and $\Re\alpha<0$. These are frequently encountered   in PDEs, in the
analysis of differential and pseudodifferential operators, orthogonal
polynomials and many other contexts. Although the history of \eqref{eq:defH2} goes back to Liouville \cite{Liouville} and Riemann \cite{Riemann}, its first systematic treatment is due to Hadamard who interpreted integrals of the type $\int_{a}^bf(s)(b-s)^{\alpha-1}ds$ arising in hyperbolic PDEs.  In \eqref{eq:defH2}, sufficient smoothness of $f$,
\begin{equation}
  \label{eq:sufc2}
  \text{$f\in C^{n}((a,b])$ and  $f^{(n)}(s)s^{\alpha+n-1}\in L^1$}
\end{equation}
 allows for  the Hadamard ``partie finie'' ($p.f.$,  finite part, see. \S\ref{Appendix}) to provide an extension of the usual integral. This extension, as shown  by Hadamard has the properties (i) and (ii) above. In fact, it  has all the properties of Lebesgue integration but positivity \cite{Hadamard}. Riesz \cite{Riesz} showed that  $p.f.$  can be (essentially)  equivalently defined by analytic continuation starting with $\Re\alpha>0$; Schwartz (see \cite{Schwartz} and \cite{Hormander}) reinterprets $p.f.$ as a distributional regularization. See the Appendix for a brief review and further references. These reinterpretations make sense if \eqref{eq:sufc2} holds. In this paper, we use {\em Riesz's definition of p.f.}

Related questions arise in singular ODEs, when a non- $L^1$ fundamental solution  needs to be defined in terms of properties of solutions at the singularity. This is possible for meromorphic ODEs if the order of the poles is sufficiently low \cite{CL} or under other regularity conditions. One of the  weakest such regularity condition is \'Ecalle's {\em analyzability}, see e.g. \cite{Ecalle1,Ecalle2} and \cite{Book} and references therein.

If the {\em integrand} in \eqref{eq:defH2} is in $L^1$ then $p.f.$ is just the Lebesgue integral. Evidently, imposing  the condition that a set of functions has a non-$L^1$ element is {\em weaker} than assuming that p.f. is not applicable to the whole set.

In this paper we show that regularity conditions  {\em are needed} for integrals such as \eqref{eq:defH2} to make sense in sufficiently rich classes of functions with a non-$L^1$ element.

\section{Setting}\label{SSetting}

In the following, ZFC is the standard axiomatization for mathematics. ZF is
ZFC without the axiom of choice. ZFDC is ZF extended with a
weak form of the axiom of choice called Dependent Choice,
abbreviated as DC. In the following, $\NN=\ZZ^+\cup\{0\}$.
\footnote{DC is a weak baseline form of the axiom of choice. DC is
used to prove the existence of infinite  sequences, and is formulated as follows.
Let $R$ be a binary relation (set of ordered pairs), where
$(\forall x \in A)(\exists y \in A)(R(x,y))$. For each $x \in A$, there exists an
infinite sequence $x = (x_0,x_1,...)$ such that $x_0=x$ and for all $i \in N$,
$R(x_i,x_{i+1})$. It has been shown that DC is provably equivalent,
over ZF, to the Baire category theorem for complete metric
spaces. See \cite{Charles}.}

To analyze the existence of an inverse of differentiation $\mathcal{P}_0$ at a singular point, say zero, we work with functions defined to the right of zero, that is $f:(0,a]\to \RR$ for some $a>0$ which may depend on $f$. In most interesting cases, $0$ will be a singular point.  To avoid unwanted obstructions due to other singularities, we assume  $f\in C^k( (0,a])$  for some $k\in\NN\cup \{\infty\}$. 
One  may smoothly extend such a function to $(0,1]$, and thus we will {\em assume} that all our functions are in $C((0,1])$. Integration of extensions or restrictions of functions can be trivially obtained from the ones defined on functions with a common domain.

\begin{Definition} {\rm We define the germ equivalence relation,}
  \begin{equation}\label{equivrel}
  f\sim_0 g\ \text{if and only if } \exists a>0 \text{ s.t. } f=g \text{ on } (0,a]
\end{equation}
\end{Definition}
\begin{Definition}{\rm 
An operator $A$ is based at $0^+$ if
\begin{equation}
  \label{eq:b0}
   f\sim_0 g \Rightarrow A(f) \sim_0 A(g) 
\end{equation}
Operators based at other points in $\RR\cup\{\infty\}$ are analogously defined.
}\end{Definition}

\subsection{Function sets}\label{spaces} 
The classical normed spaces imposing size but not smoothness constraints, such as weighted $L^p$ space and  Orlicz spaces  have the so called {\em lattice property}: if $g$ is measurable, $|g|\le |f|$,  and  $f$ is in the space, then $g$ is in the space.

We work in more general sets, with  possible growth restrictions-- through bounds-- for instance as in the classical spaces above,  while trying to prevent regularity at zero. \footnote{\z There is no all-encompassing definition of  regularity we are aware of. Certainly the integrand in \eqref{eq:defH2}  as a whole is not smooth in any usual sense; its {\em reciprocal} is smooth. An interesting  discussion of  smoothness and  links to approximability by analytic functions, is in \cite{Dynkin}.}. We work in structures of the type \eqref{eq:defspg} below, which  allow for arbitrarily fast oscillations at zero; these seem to exclude  classical regularity of any kind. The following is a weakening of the lattice property.
\begin{Definition}{\rm 
For a {\bf continuous}  function $f$ on $(0,1]$ we denote by $f^{\Join}$ the multiplicative semigroup generated by $f$ and all smooth functions bounded by $1$ on $(0,1]$:  
\begin{equation}
  \label{eq:defspg}
  f^{\Join}=\{hf:h\in C^\infty((0,1]),\,\,\|h\|_{\infty}\le 1\}
\end{equation}
\begin{Note}\label{NN3}
 In this paper we  use the notation $f^{\Join} $ {\bf only} for functions in $C((0,1])$.
\end{Note}
It can be immediately verified that if $f^{\Join}=g^{\Join}\Rightarrow (f\in L^1\Leftrightarrow g\in L^1)$. 
 }\end{Definition}

\subsection{Inverses of differentiation}
For  our negative results, we impose only
the properties of the integral from zero which, arguably, any extension should satisfy.
\begin{Definition}\label{R1}\rm For $f\in C((0,1])$, $Op(f^{\Join})$ is the collection operators   $\mathcal{P}_0$  from
$f^{\Join}$ into $C^1((0,1])$  s.t. 

(I) $\mathcal{P}_0$ is based at $0^+$ 

(II)  $\forall g\in f^{\Join}$ we have
 $(\mathcal{P}_0 g)'=g$. 
\end{Definition}
The elements of $Op(f^{\Join})$ are clearly inverses of differentiation on $f^{\Join}$. If $f\in L^1$ then $Op(f^{\Join})\ne \emptyset$, since the restriction of the Lebesgue integral from zero is in $Op(f^{\Join})$.  Note that (II) implies  $\mathcal{P}_0f'=f+const$  whenever $f'\in  C((0,1])$. Also see 
Note \ref{NN3}.
\begin{Definition}\label{Ext}{\rm 
An {\em extension of the integral from zero} is an  operator  $\mathcal{Q}_0: C((0,1]) \to C^1((0,1])$ based at $0^+$ such that $(Q_0 f)'=f$, and with the additional properties (III)  $Q_0$ is linear, (IV) $( \mathcal{Q}_0(f))(x)=\int_0^x f$ for  $f\in L^1((0,1])$, and  (V) if $f>0$ then $\exists a\text{ s.t. }(\mathcal{Q}_0f)(x)>0$ for all  $x\in (0,a)$.  

EI denotes that set of all such  $\mathcal{Q}_0$. }
\end{Definition}
\begin{Note}{\rm It will be clear that our negative results about an $Op(f^{\Join})$ are inherited by any sets (such as, say,   weighted $L^p$ spaces) containing the corresponding $f^{\Join}$}.
\end{Note}

\begin{Note}\label{nrange}{\rm  
\label{n21} \z The conditions in Definition \ref{R1} are not stronger than taking  $AC((0,1])$ instead of $C^1((0,1])$ and adding ``{\em a.e.}'' after (II). Indeed,  since  $(\int_1^x f)'=f$ and the  range of both $\mathcal{P}_0$ and  $(x,f)\mapsto \int_1^x f$  is $AC((0,1]$, for any $f$ there is a constant $C(f)$ s.t. $\forall x\in (0,1]$ we have $[\mathcal{P}_0 f](x)=\int_1^x f+C(f)$. This also implies that if $f\in C((0,1]$, $\mathcal{P}_0f \in C^1((0,1])$. Then $(\mathcal{P}_0 f)'=f$ everywhere implying that  for any $f$ in the domain of $\mathcal{P}_0$, all $ y\in(0,1]$  and  all $x\in (0,y)$ we have $[\mathcal{P}_0(f)](y)=[\mathcal{P}_0(f)](x)+\int_x^y f$ where $\int$ is the usual Lebesgue integral.}
\end{Note}
Note that Definition \ref{Ext} imposes stronger conditions than Definition \ref{R1}. That is because we are using Definition \ref{Ext} mostly for  positive results, and  Definition \ref{R1} for the negative results. 
\begin{Proposition}\label{L10}
If $f\notin L^1$  there is a $\tau\in f^{\Join}$, definable in terms of $f$, s.t 
\begin{equation}
  \label{eq:cdpm}
 \tau\ge 0 \ \text{and}\  \int_0^1 \tau=\infty 
\end{equation}
\end{Proposition}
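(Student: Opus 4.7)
The plan is to build a smooth $h \in C^\infty((0,1])$ with $0 \le h \le 1$, supported in the set where $f>0$, so that $\tau := hf \in f^{\Join}$ automatically satisfies $\tau \ge 0$ while $\int_0^1 \tau = \infty$. Since $\int_0^1 |f| = \infty$, at least one of $\int_0^1 f^{+}$, $\int_0^1 f^{-}$ is infinite; by replacing $f$ with $-f$ if needed (which leaves $f^{\Join}$ unchanged, as $-1$ is an admissible multiplier), I may assume $\int_0^1 f^{+} = \infty$. Let $E = \{x \in (0,1] : f(x) > 0\}$, an open subset of $(0,1]$. I partition $(0,1] = \bigsqcup_{k \ge 1} J_k$ with $J_k = (1/(k+1), 1/k]$. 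On each compact $\overline{J_k}$, $f$ is bounded, so $a_k := \int_{E \cap J_k} f$ is finite; but $\sum_k a_k = \int_E f = \int_0^1 f^{+} = \infty$.

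Within each annulus $J_k$ with $a_k > 0$, I construct a smooth function $\chi_k$ supported strictly inside $J_k$, with $0 \le \chi_k \le 1$ and $\int \chi_k f \ge a_k/4$. Write $E \cap J_k$ as a disjoint union of open intervals $(c_{k,j}, d_{k,j})$, all at distance $\ge 1/(k+1) > 0$ from $0$. Since $\sum_j \int_{c_{k,j}}^{d_{k,j}} f = a_k$ converges, a canonical finite subfamily $F_k$ (say, the shortest initial segment in the enumeration of components by left endpoint) has $\sum_{j \in F_k} \int_{c_{k,j}}^{d_{k,j}} f \ge a_k/2$. On each interval with $j \in F_k$, I take a smooth bump $\psi_{k,j}$ compactly supported in $(c_{k,j}, d_{k,j})$, $0 \le \psi_{k,j} \le 1$, with $\int \psi_{k,j} f \ge \tfrac{1}{2}\int_{c_{k,j}}^{d_{k,j}} f$; this is achievable by rescaling a fixed reference bump and selecting the cutoff width canonically from $\{1/n : n \in \NN\}$, using continuity of $f$ on $[c_{k,j},d_{k,j}]$. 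Set $\chi_k := \sum_{j \in F_k} \psi_{k,j}$: smooth (finite sum, disjoint supports), $0 \le \chi_k \le 1$, with $\operatorname{supp}\chi_k$ strictly inside $J_k$ and at positive distance from its endpoints $1/k,\, 1/(k+1)$.

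Finally, set $h := \sum_{k \ge 1} \chi_k$ and $\tau := hf$. Any $x \in (0,1]$ lies in some $J_k$; either $x$ sits in the support of $\chi_k$, where $h$ coincides locally with a single smooth $\psi_{k,j}$, or $x$ lies in a gap, where $\chi_k$ and, at points on $\partial J_k$, also $\chi_{k\pm 1}$ vanish in a neighborhood. Hence $h \in C^\infty((0,1])$ with $\|h\|_\infty \le 1$. Because $h \ge 0$ and $\operatorname{supp} h \subset E$, $\tau \ge 0$, and
\[
\int_0^1 \tau \;=\; \sum_k \int \chi_k\, f \;\ge\; \sum_k \frac{a_k}{4} \;=\; \infty.
\]
All selections were canonical (smallest $F_k$, smallest admissible integer cutoff, a single fixed reference bump), so $\tau$ is definable from $f$.

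\textbf{Main obstacle.} The principal difficulty is producing $h$ that is simultaneously smooth on all of $(0,1]$ and yields $\int hf = \infty$: the naive choice $h = \mathbf{1}_E$ diverges correctly but is not even continuous, while smooth bumps placed in every component of $E$ can fail to assemble into a $C^\infty$ function whenever those components accumulate at an interior point of $(0,1]$, since then one cannot avoid summing derivatives that blow up. The dyadic annular partition circumvents this because each $J_k$ contributes only \emph{finitely} many bumps and supports in different annuli are separated by a positive gap, so the only possible accumulation of bump supports is at $0 \notin (0,1]$, where no smoothness is required.
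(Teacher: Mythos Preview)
Your proof is correct and takes a genuinely different route from the paper's. Both arguments reduce (WLOG) to $\int_0^1 f^{+}=\infty$ and build a smooth $0\le h\le 1$ supported in $\{f>0\}$ with $\int hf=\infty$; the difference lies in how the bump supports are organized. You impose an \emph{extrinsic} dyadic partition $J_k=(1/(k{+}1),1/k]$ and place finitely many bumps inside each annulus, so that supports can accumulate only at $0\notin(0,1]$. The paper instead works with the \emph{intrinsic} level sets $G=\{f\ge\tfrac12\}$ and $L=\{f\le\tfrac14\}$: it extracts a decreasing sequence of intervals $[b_j,c_j]$ on which $f>\tfrac14$, separated by gaps where $f$ drops to $\le\tfrac14$, and takes $h$ to be a single gluing function equal to $1$ on each $[b_j,c_j]$ and $0$ on the (slightly shrunk) gaps. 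Your construction is more pedestrian but entirely robust; the paper's buys the quantitative bound $\int_0^1(f^{+}-\tau)<1$, and exactly this ``zero out on prescribed intervals with controlled $L^1$ error'' mechanism is recycled in the proof of Proposition~\ref{providesum}. Two cosmetic points in your write-up: $E\cap J_k$ need not be open in $\RR$ at the right endpoint $1/k$, so work with $E\cap(1/(k{+}1),1/k)$ instead; and ordering components by left endpoint is a linear order, not an enumeration, so pick $F_k$ via any canonical enumeration (e.g., through a fixed listing of the rationals).
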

The proof of this proposition is given in \S\ref{S43}.
\begin{Proposition} \label{P44}
\label{parti}  
For $p\in (1,\infty)$,  $L^p_w((0,1])\not\subseteq L^1$,  {\bf iff}  $\int_0^1 w^{-\frac{1}{p-1}}=\infty$. If $p=\infty$ then this conclusion holds {\bf iff} $\int_0^1(1/w)=\infty$, while if $p=1$, it holds {\bf iff} $w(0)=0$. 
A  $\tau\in C((0,1])$ such that \eqref{eq:cdpm} holds can be defined in terms of $w$.
\end{Proposition}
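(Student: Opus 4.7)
The plan is to treat the three cases separately. In each case the direction $\int\cdots<\infty \Rightarrow L^p_w\subseteq L^1$ is a one-line Hölder inequality, while the converse requires an explicit, $w$-dependent construction of a nonnegative continuous $\tau\in L^p_w$ realizing \eqref{eq:cdpm}.

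For $1<p<\infty$, factor $|f| = (|f|w^{1/p})\cdot w^{-1/p}$ and apply Hölder with conjugate exponent $q = p/(p-1)$. Since $q/p = 1/(p-1)$, this gives
\begin{equation*}
\int_0^1 |f|\,dx \;\le\; \|f\|_{L^p_w}\Bigl(\int_0^1 w^{-1/(p-1)}\Bigr)^{1/q},
\end{equation*}
which proves $L^p_w\subseteq L^1$ whenever the latter integral is finite. For the converse, set $\phi = w^{-1/(p-1)}$ and $\Phi(x) = \int_x^1 \phi(s)\,ds$; by hypothesis $\Phi(x)\to\infty$ as $x\to 0^+$. Define
\begin{equation*}
\tau(x) = \frac{\phi(x)}{1+\Phi(x)},
\end{equation*}
continuous, nonnegative, and built from $w$ alone. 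The substitution $u=\Phi(x)$, $du=-\phi(x)\,dx$, together with the identity $\phi^p w=\phi$, reduces $\int_0^1 \tau\,dx$ to $\int_0^\infty du/(1+u)=\infty$ and $\int_0^1 \tau^p w\,dx$ to $\int_0^\infty du/(1+u)^p = 1/(p-1)<\infty$, so $\tau\in L^p_w\setminus L^1$.

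For $p=\infty$, under the convention $\|f\|_{L^\infty_w}=\operatorname{ess\,sup}|f|w$, every $f\in L^\infty_w$ satisfies $|f|\le C/w$, hence $\int|f|\le C\int_0^1(1/w)$; and $\tau=1/w$ directly realizes \eqref{eq:cdpm} when $\int(1/w)=\infty$. For $p=1$, read ``$w(0)=0$'' as $\liminf_{x\to 0^+}w(x)=0$, equivalent for continuous $w>0$ on $(0,1]$ to the failure of a uniform lower bound: if $w\ge c>0$ then $\int|f|\le c^{-1}\int|f|w$; otherwise pick $x_n\to 0$ with $w(x_n)\le 2^{-n}$, choose disjoint open neighborhoods $I_n\ni x_n$ on which continuity forces $w\le 2w(x_n)$, and set $\tau=\sum_n n^{-1}\psi_n$ with $\psi_n\in C^\infty_c(I_n)$, $\psi_n\ge 0$, $\int\psi_n=1$; then $\int\tau=\sum n^{-1}=\infty$ while $\int\tau w\le 2\sum w(x_n)/n<\infty$. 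The only real point of care is the continuity and explicit $w$-dependence of $\tau$, which the closed form $\phi/(1+\Phi)$ and the $w$-adapted bump construction achieve; no appeal to Proposition \ref{L10} is needed, although $\tau$ could alternatively be extracted from it applied to any such $f$, since $f^{\Join}\subseteq L^p_w$ whenever $f\in L^p_w$.
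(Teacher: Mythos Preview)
Your proof is correct and follows the same strategy as the paper: H\"older for the inclusion direction, and an explicit $w$-built $\tau$ for the failure direction. The paper's choices differ only in details: for $1<p<\infty$ it takes $\tau(x)=\phi(x)/\Phi(x)$ (without your $+1$, so yours is cleaner at $x=1$), and for $p=1$ it partitions $(0,1]$ into level sets $A_k=\{x:w(x)\in[1/k^2,1/(k+1)^2)\}$ and sets $\tau=\sum\chi_{A_k}/m(A_k)$, smoothing afterward, whereas you place bump functions directly at points where $w$ is small. Both constructions realize the same idea of concentrating mass where $w$ is small.

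One remark: the paper's own proof actually uses the convention $\|f\|_{L^p_w}^p=\int (|f|w)^p$, which leads to the exponent $w^{-p/(p-1)}$ rather than the $w^{-1/(p-1)}$ appearing in the statement; you have (correctly) matched the statement, working with $\|f\|_{L^p_w}^p=\int |f|^p w$. Your identity $\phi^p w=\phi$ and the substitution $u=\Phi(x)$ are exactly what make both integrals computable in closed form, and this verification is more explicit than the paper's ``straightforward calculations show''.
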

\z The proof of Proposition \ref{P44} is given in \S\ref{explcP}.
\begin{Note}
  Existence of a $\mathcal{P}_0$ on  a weighted $L^p$ space, $L^p_w$ is the same as existence of a $\mathcal{P}_0$ for every   $f^{\Join}, f\in L^p_w$. For negative results, this allows us to only consider  $Op(f^{\Join})$.
\end{Note}
On  $C((0,1])$ we use the topology of uniform convergence on compact sets. This topology is induced by the sequence of seminorms  $\mathcal{F}(f):=\big(\sup_{[n^{-1},1]}|f|\big)_{n\in\ZZ^+}$. The sequence vanishes iff $f=0$. Hence, the space is metrizable (see e.g. \cite{Kechris} p.3). Since the polynomials with rational coefficients are dense in this space, the space is Polish.
\begin{Definition}
  We denote by $\mathbb{P}$ the Polish space above. A set is said to be Borel measurable if it is Borel measurable in $\mathbb{P}$.
\end{Definition}
\section{Main results}\label{Results}
According to Theorem \ref{T1}  (\ref{T1a}) below, ZFC proves that $EI$ has  $2^{\frak{c}}$ elements. This is proved in \S\ref{existchoice} using
the Axiom of Choice. The rest of our results are negative and establish how pathological the construction of elements of $EI$ must be.

Specifically, according to Theorem \ref{New10},  there is no description for  
which it is
provable in ZFC that the description uniquely defines an element of $Op(f^{\Join})$ with $f\notin L^1$. In particular, this rules out usable formulas
for such operators that can be proved to work within the
usual axioms for mathematics.

We emphasize Theorem \ref{New10}  and the sharpened form Theorem \ref{New11} over Theorem \ref{T1} \eqref{T1b}, \eqref{T1c}, \eqref{T1d}. This is because the Axiom of
Choice has long since transitioned from being
controversial, to being accepted as part of the
usual ZFC axiomatization for mathematics. However,
the impossibility of giving explicit examples that
can be verified to hold in ZFC represents a deeper
and more serious impossibility than merely
requiring the use of the axiom of choice to prove
existence (beyond the relatively benign DC). In
practice, the two kinds of impossibility are
closely related, although there are counterexamples
to direct implications between the two. Theorems \ref{T2} and  Theorem  \ref{T2.5} provide the kind of if and only if information given by Theorem  \ref{T1} \eqref{T1b} -- but in the setting of ZFC.

In the following we denote  by $L^1$, by abuse of notation, the space of measurable {\em functions}   $f:(0,1]\to \RR$ for which $\lim_{\epsilon\to 0^+}\int_\epsilon^1 f$ exists.
\begin{Proposition}\label{Ck}
 The sets $C^k((0,1])\,\,, k\in\NN\cup\{\infty\}$ are Borel measurable subsets of $\mathbb{P}$. \end{Proposition}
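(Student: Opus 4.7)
The plan is to reduce the problem to compact subintervals, handle $k=1$ via a uniform Cauchy criterion for secants, and then bootstrap by induction. The case $k=0$ is immediate since $C^0((0,1])=\mathbb{P}$. For $k\ge 1$ and each $n\ge 1$, the restriction map $\rho_n\colon\mathbb{P}\to C([1/n,1])$ (target equipped with its Polish sup-norm topology) is continuous by the very definition of the topology on $\mathbb{P}$, and
\begin{equation*}
C^k((0,1])=\bigcap_{n\ge 1}\rho_n^{-1}\bigl(C^k([1/n,1])\bigr).
\end{equation*}
It therefore suffices to prove that, for every compact interval $[a,b]$ and every $k\in\NN$, the set $C^k([a,b])$ is Borel in $C([a,b])$.

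For the base case $k=1$, I would use the secant characterization: with $S_f(x,y):=(f(y)-f(x))/(y-x)$ defined on $[a,b]^2\setminus\{x=y\}$, a function $f\in C([a,b])$ belongs to $C^1([a,b])$ iff $S_f$ extends continuously to $[a,b]^2$. One direction follows from the Hermite--Genocchi representation $S_f(x,y)=\int_0^1 f'(x+t(y-x))\,dt$; the converse recovers $f'$ as the restriction of the continuous extension to the diagonal. Because $S_f$ is already continuous on its open domain, this extension exists iff $S_f$ is uniformly continuous there, which by continuity can be tested over rationals. Explicitly, $f\in C^1([a,b])$ iff
\begin{equation*}
\forall m\in\NN\ \exists N\in\NN\ \forall (x_1,y_1,x_2,y_2)\in T_N\colon\ |S_f(x_1,y_1)-S_f(x_2,y_2)|<1/m,
\end{equation*}
where $T_N$ is the countable set of $(\QQ\cap[a,b])^4$-points with $x_i\ne y_i$ and $|x_1-x_2|+|y_1-y_2|<1/N$. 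Each inner condition is open in $f$ (since $f\mapsto f(q)$ is continuous for each rational $q$), so the formula realizes $C^1([a,b])$ as a $\Pi^0_3$, hence Borel, subset of $C([a,b])$.

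For the induction, the derivative operator $D\colon C^1([a,b])\to C([a,b])$ is Borel: for each rational $x$, $D(f)(x)$ is a pointwise limit of the continuous maps $f\mapsto n\bigl(f(x+1/n)-f(x)\bigr)$ (with a one-sided variant near the right endpoint), and the Borel structure of $C([a,b])$ is generated by evaluations at a countable dense set. Assuming inductively that $C^{k-1}([a,b])$ is Borel,
\begin{equation*}
C^k([a,b])=C^1([a,b])\cap D^{-1}\bigl(C^{k-1}([a,b])\bigr)
\end{equation*}
is Borel as well. Finally $C^\infty((0,1])=\bigcap_{k\in\NN}C^k((0,1])$ is a countable intersection of Borel sets. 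The only step requiring real attention is the $C^1$ equivalence together with the quantifier bookkeeping keeping the set inside the Borel hierarchy; everything else is a soft, formal argument.
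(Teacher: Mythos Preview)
Your argument is correct, but it follows a genuinely different route from the paper's. The paper never passes to compact subintervals and never writes down a secant/uniform-Cauchy criterion. Instead it works globally on $\mathbb{P}$ and exploits the integration operator $J_1 f(x)=\int_1^x f$, which is continuous and \emph{injective} on $\mathbb{P}$; by the Luzin--Souslin theorem (Kechris, Corollary~15.2), $J_1(\mathcal{O})$ is Borel for every open $\mathcal{O}$. The identity $D^{-1}(\mathcal{O})=H^{-1}(J_1(\mathcal{O}))$ with $H(f)=f-f(1)$ then shows that differentiation, viewed as a partial map on $\mathbb{P}$ with domain $C^1((0,1])$, pulls open sets back to Borel sets; hence $C^k((0,1])=(D^k)^{-1}(\mathbb{P})$ is Borel. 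Your approach trades the appeal to Luzin--Souslin for a longer but entirely elementary computation, and it has the side benefit of giving an explicit complexity bound: your quantifier count places $C^1([a,b])$ in $\boldsymbol{\Pi}^0_3$, which matches the $\boldsymbol{\Pi}^0_3$-completeness the paper cites in a footnote (Andretta--Marcone). The paper's route is shorter and arguably slicker, but it does not by itself yield that bound.
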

The proof of this proposition is given in \S\ref{SCk}.
We say that a sentence is independent  of a theory if it can neither be proved or refuted in that theory.
\begin{Theorem}\label{T1}
  \begin{enumerate}[(a)]
\item \label{T1a} ZFC proves that $EI$ has  $2^{\frak{c}}$ elements. 
\item\label{T1b} 
 ZFDC proves the following: $ Op(f^{\Join})$ has a  Borel measurable element
if and only if  $f\in L^1$.
\item\label{T1c}  
ZFDC proves  that, if $f\notin L^1$ and  $Op(f^{\Join})\ne \emptyset$, then there
is a set of reals which is not Baire measurable.
\item \label{T1d}    The statement $(\exists f\notin L^1)( Op(f^{\Join})\ne \emptyset)$ is not provable in ZFDC.

\end{enumerate}\end{Theorem}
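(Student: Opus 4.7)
Part (a) is a ZFC existence construction powered by AC; parts (b)--(d) are successively stronger impossibility results, with (b) the technical core and (c), (d) following from (b) via descriptive set theory and metamathematics.

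\textbf{Proof plan for (a).} The plan is a Hamel basis extension. First observe that $V:=C((0,1])$ has $\RR$-dimension $\mathfrak c$, and that $W:=L^1\cap V$ has codimension $\mathfrak c$ in $V$: the family $\{x^{-\alpha}\}_{\alpha\in[1,2]}$ is linearly independent modulo $W$ by a direct comparison of singularities at $0$. Using AC, fix a Hamel basis $B=B_0\sqcup B_1$ of $V$ with $B_0\subset W$ and $|B_1|=\mathfrak c$. For each choice of constants $(C_b)_{b\in B_1}\in\RR^{B_1}$ define $\mathcal Q_0 b(x)=\int_0^x b$ for $b\in B_0$ and $\mathcal Q_0 b(x)=\int_1^x b+C_b$ for $b\in B_1$, and extend $\RR$-linearly. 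Linearity (III), (II) and (IV) are immediate; (I) holds because $f\sim_0 g$ forces $f-g\in W$ to vanish near $0$, so $\mathcal Q_0(f-g)(x)=\int_0^x(f-g)=0$ near $0$. Property (V) is arranged by choosing $B_1$ compatibly with the positive cone. Since the $\mathfrak c$-many constants are chosen freely in $\RR$, we obtain $|\RR|^{\mathfrak c}=2^{\mathfrak c}$ distinct elements of $EI$.

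\textbf{Proof plan for (b).} The forward direction is trivial: for $f\in L^1$ the Lebesgue map $g\mapsto\int_0^x g$ is continuous on $\mathbb P$, hence Borel, and lies in $Op(f^{\Join})$. Conversely, assume $\mathcal P_0\in Op(f^{\Join})$ is Borel and $f\notin L^1$; I aim for a contradiction. By Proposition~\ref{L10} fix $\tau\in f^{\Join}$ with $\tau\ge 0$, $\int_0^1\tau=\infty$. Choose smooth bumps $\psi_n$ with pairwise disjoint supports in $[1/(n+1),1/n]$, $\|\psi_n\|_\infty\le 1$, and with $I_n:=\int\psi_n\tau>0$ arranged so that $\sum_n I_n=\infty$ (possible since $\int_0^1\tau=\infty$). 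For each $S\subseteq\NN$ set $H_S:=\sum_{n\in S}\psi_n$, a smooth function with $\|H_S\|_\infty\le 1$, and $g_S:=H_S\tau\in f^{\Join}$; the map $S\mapsto g_S$ from Cantor space $2^\NN$ into $\mathbb P$ is continuous (on $[a,1]$ only finitely many $\psi_n$ are nonzero). Hence $\phi(S):=\mathcal P_0(g_S)(1)$ is Borel. For any $S,S'$ with $S\mathbin\triangle S'$ finite we have $g_S\sim_0 g_{S'}$, and ``based at $0^+$'' combined with Note~\ref{nrange} yields the cocycle
\[
\phi(S)-\phi(S')\;=\;\sum_{n\in S\setminus S'}I_n-\sum_{n\in S'\setminus S}I_n .
\]
The key step, and the main obstacle, is to turn this cocycle against Borel measurability. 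Being Borel, $\phi$ has the Baire property, hence is continuous on some comeager $G_\delta$ set $G\subseteq 2^\NN$. Let $G^*:=\bigcap_F \sigma_F^{-1}(G)$, where the intersection is over finite $F\subset\NN$ and $\sigma_F(S):=S\mathbin\triangle F$ is a homeomorphism of $2^\NN$; since each $\sigma_F^{-1}(G)$ is comeager and the intersection is countable, $G^*$ is comeager and nonempty. Pick any $S\in G^*$. Since $\sum_n I_n=\infty$, at least one of $\sum_{n\in S}I_n$, $\sum_{n\notin S}I_n$ is infinite; in the latter case (the other is symmetric) we can choose finite $F_k\subseteq\NN\setminus S$ with $\min F_k\to\infty$ and $\sum_{n\in F_k}I_n\ge 1$. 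Then $S\mathbin\triangle F_k\to S$ in $2^\NN$ and $S\mathbin\triangle F_k\in G$, yet the cocycle gives $\phi(S\mathbin\triangle F_k)-\phi(S)\ge 1$, contradicting the continuity of $\phi|_G$ at $S$.

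\textbf{Proof plan for (c) and (d).} For (c): the argument in (b) uses only the Baire property of $\phi$ (continuity on a comeager $G_\delta$). If every set of reals has the Baire property, then, transferring via Borel isomorphism, every function between Polish spaces is Baire measurable, and the (b) argument yields the same contradiction. Hence, by contrapositive, if $f\notin L^1$ and $Op(f^{\Join})\ne\emptyset$ then some set of reals fails the Baire property. For (d): invoke Shelah's theorem that ZF + DC + ``every set of reals has the Baire property'' is equiconsistent with ZFC. In any such model, (c) rules out any $f\notin L^1$ with $Op(f^{\Join})\ne\emptyset$, so this existence statement is false in the model and therefore unprovable in ZFDC.
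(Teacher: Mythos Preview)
Your proof is correct and follows the same strategy as the paper: a Hamel-basis construction for (a), a Baire-category cocycle argument on $2^{\NN}$ for (b), the observation that only the Baire property of $\phi$ is needed for (c), and Shelah's model for (d). The paper's only organizational difference is that it factors (b)--(c) through an intermediate abstraction---the ``summation operators'' $\mathcal S:\{0,1\}^{\NN}\to\RR^{\NN}$ of Proposition~\ref{providesum} (built from a partition $(\alpha_k)$ with $\int_{\alpha_{k+1}}^{\alpha_k}\tau=1$) together with the separate Theorem~\ref{THEOREM 7.} that no such $\mathcal S$ is Baire measurable---whereas you run the generic-continuity contradiction directly on $\phi(S)=\mathcal P_0(g_S)(1)$; the underlying mathematics is identical, and your treatment of property~(V) in (a) is no more hand-waved than the paper's.
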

The proof of Theorem \ref{T1} \eqref{T1a} extends easily to the measurable functions on $(0,1]$ for which  the Lebesgue integral from 1 exists for any $\epsilon>0$ (the limit as $\epsilon\to 0^+$ of the integral may not exist). 

Also, note the important if and only if nature of Theorem \ref{T1}\eqref{T1b}.

The following is an easy corollary of Theorem~\ref{T1}.
\begin{Theorem}\label{T14}
         $EI\neq\emptyset$   is  independent of ZFDC.  
For any $ k\in\NN\cup \{\infty\}$, $(\exists f\in C^k((0,1])\setminus L^1)( Op(f^{\Join})\ne \emptyset)$ is also independent of ZFDC. 
\end{Theorem}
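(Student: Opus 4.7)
The plan is to derive Theorem \ref{T14} by straightforward bookkeeping from Theorem \ref{T1}, invoking part (a) for non-refutability in ZFDC and part (d) for non-provability in ZFDC. The only substantive ingredient is the compatibility between Definitions \ref{R1} and \ref{Ext}: provably in ZF, any $\mathcal{Q}_0 \in EI$ restricts, for every $f \in C((0,1])$, to an element of $Op(f^{\Join})$. Indeed, $f^{\Join} \subseteq C((0,1]) = \mathrm{dom}(\mathcal{Q}_0)$ and the codomain $C^1((0,1])$ matches; property (I) of being based at $0^+$ is inherited directly from $\mathcal{Q}_0$; and (II) is the restriction of $(\mathcal{Q}_0 g)' = g$ to $g \in f^{\Join}$. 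Hence ZF proves the schematic implication $EI \neq \emptyset \Rightarrow Op(f^{\Join}) \neq \emptyset$ for every $f \in C((0,1])$.

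For non-refutability I would apply Theorem \ref{T1}\eqref{T1a}: ZFC proves $|EI| = 2^{\mathfrak{c}}$, so in particular $EI \neq \emptyset$ is a theorem of ZFC. Taking the concrete witness $f(x) = 1/x$, which lies in $C^{\infty}((0,1]) \setminus L^1$ and hence in $C^k((0,1]) \setminus L^1$ for every $k \in \NN \cup \{\infty\}$, the compatibility above upgrades this to a ZFC proof that, for each such $k$, there exists $f \in C^k((0,1]) \setminus L^1$ with $Op(f^{\Join}) \neq \emptyset$. Since ZFC extends ZFDC and ZFC is assumed consistent, ZFDC refutes neither statement in Theorem \ref{T14}.

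For non-provability I would observe that the first statement $EI \neq \emptyset$ implies, via the compatibility above applied to $f(x)=1/x$, the existential sentence $(\exists f \notin L^1)\, Op(f^{\Join}) \neq \emptyset$; and the second statement implies it directly, since $C^k((0,1]) \subseteq C((0,1])$. Theorem \ref{T1}\eqref{T1d} asserts that this existential is not provable in ZFDC, so neither is any sentence implying it. I expect no genuine obstacle in this proof: the content is entirely absorbed into Theorem \ref{T1}\eqref{T1a} and \eqref{T1d}, once the two easy verifications are noted, namely that $EI$-operators restrict to $Op(f^{\Join})$-operators and that $1/x$ provides an explicit smooth non-$L^1$ witness valid for all $k$ at once.
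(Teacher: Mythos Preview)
Your argument is correct and follows essentially the same route as the paper, which simply records that Theorem \ref{T14} follows from Theorem \ref{T1}\eqref{T1a} and \eqref{T1d}. You have merely made explicit the two easy observations the paper leaves to the reader: that any $\mathcal{Q}_0\in EI$ restricts to an element of $Op(f^{\Join})$ for every $f\in C((0,1])$, and that a single smooth witness such as $f(x)=1/x$ handles all $k\in\NN\cup\{\infty\}$ at once.
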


\begin{Theorem}\label{New10}
 There is no definition which, provably in ZFC,
uniquely defines  some element of some $Op(f^{\Join})$ with $ f\notin L^1$. This also holds for ZFC
extended by the usual large cardinal hypotheses.
\end{Theorem}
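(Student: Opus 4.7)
The plan is to argue by contradiction using inner model theory and the Martin--Steel--Woodin theorem. Assume toward contradiction that some formula $\phi$, possibly with a real parameter, is provably in ZFC (or in ZFC together with large cardinals) a unique definition of an object $\mathcal{P}_0 \in Op(f^{\Join})$ for some $f \notin L^1$. Since a ZFC proof survives in any consistent extension, it suffices to derive a contradiction in ZFC augmented by a proper class of Woodin cardinals with a measurable above. Under this hypothesis, $L(\RR) \models \mathrm{AD} + \mathrm{DC}$, and in particular every set of reals in $L(\RR)$ has the Baire property.

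Work in such a universe $V$. After coding elements of $C((0,1])$ and $C^1((0,1])$ as reals (for instance, by values on the rationals in $(0,1]$), the operator $\mathcal{P}_0$ becomes a set of pairs of reals and the witness $f$ becomes a real; both are ordinal-definable from the real parameter of $\phi$, hence reside in $L(\RR)^V$. The assertion ``$\mathcal{P}_0 \in Op(f^{\Join})$ and $f \notin L^1$''---unwound through the coding into the conditions that $\mathcal{P}_0$ is based at $0^+$, that $(\mathcal{P}_0 g)'=g$ for every $g \in f^{\Join}$, and that the Lebesgue integrals $\int_{1/n}^1 f$ fail to have a finite limit---is a low-level projective statement (no worse than $\Pi^1_2$) about those codes. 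Shoenfield absoluteness then transfers it between $V$ and $L(\RR)^V$, so the latter, as a model of ZFDC, also sees $\mathcal{P}_0 \in Op(f^{\Join})$ with $f \notin L^1$.

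Now apply Theorem~\ref{T1}\eqref{T1c} inside $L(\RR)^V$: the presence of such a $\mathcal{P}_0$ forces the existence of a set of reals (in $L(\RR)^V$) without the Baire property, contradicting the regularity established in the first paragraph. Hence no such $\phi$ can exist, even assuming large cardinals, and a fortiori not in plain ZFC. The main obstacle is the absoluteness bookkeeping: one must verify that the predicates ``based at $0^+$'', ``$(\mathcal{P}_0 g)' = g$ for $g \in f^{\Join}$'', and ``$f \notin L^1$'' genuinely unwind to $\Pi^1_2$ formulas in the chosen coding, and that the set $\mathcal{P}_0$ built in $V$ is literally (not merely extensionally) an element of $L(\RR)^V$; once this is arranged the inner-model/large-cardinal machinery delivers the contradiction essentially mechanically.
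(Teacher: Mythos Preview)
Your strategy has a genuine gap at the step ``$\mathcal{P}_0$ is ordinal-definable from the real parameter of $\phi$, hence resides in $L(\RR)^V$.'' Ordinal definability in $V$ from a real does \emph{not} in general place a set of reals into $L(\RR)^V$; the class $\mathrm{HOD}_\RR$ can be strictly larger than $L(\RR)$. (Concretely, under your hypotheses $L(\RR)\models\mathrm{AD}$, so $L(\RR)$ contains no well-ordering of $\RR$; yet nothing prevents $V$ from having an OD-from-a-real well-ordering of $\RR$.) You flag this yourself as ``bookkeeping,'' but it is the crux and it is not merely bookkeeping. The related attempt to repair this via Shoenfield absoluteness also breaks: Shoenfield transfers $\Pi^1_2$ statements with \emph{real} parameters, but your statement ``$\mathcal{P}_0\in Op(f^{\Join})$'' carries $\mathcal{P}_0$ --- a set of reals, not a real --- as a parameter, and with a second-order parameter the absoluteness you invoke is not available. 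Nor can you reinterpret $\phi$ directly inside $L(\RR)$, since $L(\RR)$ satisfies only ZF+DC, so the ZFC-proof that $\phi$ uniquely defines an element of some $Op(f^{\Join})$ need not go through there.

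The paper sidesteps exactly these obstacles by working not in $L(\RR)$ but in a Solovay--Shelah forcing extension $M'$ of a countable model of $T$. The point is that $M'$ is a genuine model of $T$ (in particular of full ZFC), so the hypothesis ``$T$ proves $\phi$ uniquely defines \ldots'' applies \emph{verbatim} inside $M'$; and Shelah's theorem guarantees that in $M'$ every set of reals definable from a real parameter has the Baire property. The explicit constructions of the Interface section then produce, inside $M'$ and definably from the same real, a non-Baire-measurable set of reals, which is the contradiction. An incidental bonus of the paper's route is that for $T=\mathrm{ZFC}$ it needs only $\mathrm{Con}(\mathrm{ZFC})$ metamathematically (via Shelah's removal of the inaccessible), whereas your route presupposes $\mathrm{Con}(\mathrm{ZFC}+\text{many Woodins})$.
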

  \begin{Theorem}\label{New11}
There is no definition which, provably in ZFC, uniquely defines a
  function whose domain is a set of real numbers with a value that is in $Op(f^{\Join})$  for some $ f\notin L^1$. This also holds for ZFC extended by
the usual large cardinal hypotheses.
      \end{Theorem}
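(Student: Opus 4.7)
The plan is to reduce Theorem \ref{New11} to the combinatorial obstruction underlying Theorem \ref{T1}\eqref{T1c} by absorbing the ``witness real'' into a single real parameter, and then to use the regularity of sets of reals in $L(\RR)$ under the usual large cardinal hypotheses. The point is that while the offending operator inside $F$ is picked out by a (possibly non-definable) real $r_{0}$, that single real is tame enough for the obstruction to still be captured inside $L(\RR)$, where large-cardinal regularity takes over.

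Suppose, for contradiction, that a parameter-free formula $\phi(F)$ of set theory provably defines a unique $F$ (in ZFC plus the large cardinal axioms in question) such that $\mathrm{dom}(F)\subseteq\RR$ and, for some $r_{0}\in\mathrm{dom}(F)$, we have $F(r_{0})\in Op(f^{\Join})$ with $f\notin L^{1}$. Because $\phi$ has no parameters, $F$ is ordinal definable; therefore for every $r\in\mathrm{dom}(F)$ the value $F(r)$ is ordinal definable from the single real parameter $r$, and every set of reals produced by a uniform definable construction from $F(r)$ lies in $\mathrm{OD}_{r}\subseteq L(\RR)$.

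Next I would inspect the proof of Theorem \ref{T1}\eqref{T1c}, verifying that it is \emph{uniform}: it should produce, via a single formula applied to any $\mathcal{P}_{0}\in Op(f^{\Join})$ with $f\notin L^{1}$, a set $S(\mathcal{P}_{0})\subseteq\RR$ lacking the Baire property (here $f$ is recoverable, up to the ambiguities allowed by the $\Join$-construction, from the domain of $\mathcal{P}_{0}$). Applying this uniform construction to $\mathcal{P}_{0}=F(r_{0})$ yields a set $S$ that is ordinal definable from the real $r_{0}$, hence $S\in L(\RR)$. The standard consequence of a proper class of Woodin cardinals is $L(\RR)\models\mathrm{AD}$, from which every set of reals in $L(\RR)$ has the Baire property in $V$. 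This contradicts the failure of the Baire property for $S$ and closes the argument; the same reduction works verbatim for the customary large cardinal extensions of ZFC, since each of them either implies or can be strengthened to imply the requisite regularity of $L(\RR)$.

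The principal obstacle is the verification of uniformity in the middle step: one must confirm that the non-Baire-measurable set exhibited in the proof of Theorem \ref{T1}\eqref{T1c} is given by a single formula taking the operator $\mathcal{P}_{0}$ (together with whatever is canonically recoverable from its domain) as input, rather than through an unspecified choice that would require ordinal parameters outside $L(\RR)$. A secondary technicality is that $F(r)$ is itself an operator on the large set $f^{\Join}\subseteq C((0,1])$; one deals with this by working with the graph of $F(r_{0})$, which is a subset of the Polish space $\mathbb{P}\times\mathbb{P}$ and so has a real code accessible inside $L(\RR)$ once $r_{0}$ is fixed as a parameter.
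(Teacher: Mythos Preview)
Your argument has two genuine gaps.

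First, the theorem is about provability in ZFC itself (the large cardinal clause is an \emph{addition}, not a replacement). Your proof uses $\mathrm{AD}^{L(\RR)}$, which follows from a proper class of Woodin cardinals but is certainly not available in ZFC alone, and is also not available under the weaker ``usual'' large cardinal hypotheses such as inaccessible, Mahlo, weakly compact, or even a measurable. So your argument says nothing about the ZFC case and nothing about most of the large cardinal extensions covered by the statement.

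Second, and more seriously, the inclusion $\mathrm{OD}_{r}\subseteq L(\RR)$ for sets of reals is simply false, even in the presence of a proper class of Woodin cardinals. One can always force $V=\mathrm{HOD}$ by class forcing that adds no reals and preserves the Woodins; in the resulting model every set of reals is ordinal definable, yet by AC there are sets of reals without the Baire property, and these cannot lie in $L(\RR)$ since $L(\RR)\models\mathrm{AD}$. So the step ``$S$ is $\mathrm{OD}_{r_0}$, hence $S\in L(\RR)$'' does not go through, and the contradiction is not reached. (Your remark that the graph of $F(r_0)$ ``has a real code accessible inside $L(\RR)$'' runs into the same problem: being $\mathrm{OD}$ from a real does not put a subset of a Polish space into $L(\RR)$.)

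The paper avoids both issues by arguing not in $V$ but inside a countable model. Given the putative formula $\varphi$, one takes a countable model $M$ of $T$ (where $T$ is ZFC or ZFC plus the large cardinal hypothesis in question) and passes to the Solovay--Shelah forcing extension $M'$ in which every set of reals that is $M'$-definable from a real parameter is internally Baire measurable. Since $M'\models T$, the formula $\varphi$ still picks out a unique $F$ inside $M'$; one then runs the explicit construction from Proposition~\ref{providesum} and Theorem~\ref{THEOREM 7.} inside $M'$ to obtain a set of reals which is $M'$-definable from the witness real yet not Baire measurable in $M'$, contradicting the choice of $M'$. Shelah's result removes the need for an inaccessible in the ground model, so the argument goes through for $T=\mathrm{ZFC}$; mild forcing preserves the large cardinals, so it also goes through for the large cardinal extensions.
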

     
In
order to obtain  if and only if information as in Theorem \ref{T1} \eqref{T1b} in the
context of ZFC, we need to work with concretely given $f\in C((0,1])$. We say that $E\subset  \ZZ^4$  codes  an $f \in C((0,1])$ if and only if $\forall (a,b,c,d) \in \ZZ^4$,  $a/b < f(c/d)$ iff $(a,b,c,d) \in  E$. An arithmetic presentation of $E \subset  \ZZ^4$ takes the form $\{(a,b,c,d) \in \ZZ^4: \phi\}$, where $\phi$ is a formula involving $\forall, \exists, \neg, \vee , \wedge, +,-,\cdot,<,0,1,$  variables ranging over $\ZZ$,  with at most the free variables $a,b,c,d$.

 \begin{Theorem}\label{T2}
Let $E\subset \ZZ^4$ be arithmetically presented, where ZFC proves that $E$ codes some  $f\in C((0,1])$.  There is a definition, which, provably in ZFC, uniquely defines an element of $Op(f^{\Join})$ if and only if ZFC proves $f\in L^1$. This also holds for ZFC extended by the usual
large cardinal hypotheses.
  \end{Theorem}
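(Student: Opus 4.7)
\textbf{Plan for the proof of Theorem \ref{T2}.}

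\emph{Sufficiency} ($\Leftarrow$). Assume ZFC proves $f \in L^1$. Every $g \in f^{\Join}$ factors as $g = hf$ with $h \in C^\infty((0,1])$ and $\|h\|_\infty \le 1$, so $|g| \le |f| \in L^1$ forces $g \in L^1$. I define $\mathcal{P}_0$ by the explicit Lebesgue-integral formula $\mathcal{P}_0(g)(x) := \int_0^x g(s)\,ds$. This formula manifestly describes a unique operator, and ZFC proves $\mathcal{P}_0 \in Op(f^{\Join})$: property (II) of Definition \ref{R1} is the fundamental theorem of calculus for continuous integrands, and property (I) holds because $g_1 = g_2$ on $(0,a)$ forces $\int_0^x g_1 = \int_0^x g_2$ for every $x \in (0,a)$.

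\emph{Necessity} ($\Rightarrow$). The plan is to argue by contradiction, combining Theorem \ref{T1}(c) with definability-plus-regularity considerations in the spirit of Theorem \ref{New10}. Assume $\psi$ is a ZFC-formula with ZFC $\vdash$ ``$\psi$ uniquely defines some $\mathcal{P}_0 \in Op(f^{\Join})$'', where $f$ is coded by $E$. Suppose for contradiction that ZFC does not prove $f \in L^1$. By Gödel's completeness theorem there is a model $M$ of ZFC (in the large-cardinal version of the theorem, also satisfying the LC hypothesis) in which $f \notin L^1$ while $\psi$ still defines $\mathcal{P}_0 \in Op(f^{\Join})$. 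Theorem \ref{T1}(c), applied inside $M$, yields a non-Baire-measurable set $S \subset \RR$ in $M$.

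The decisive step is to inspect the construction of $S$ in the proof of Theorem \ref{T1}(c): $S$ is built from $\mathcal{P}_0$ by a bounded, definable recipe, so $S$ lies at a fixed finite level of the projective hierarchy in $M$, with complexity determined by $\psi$, $E$, and the recipe. Under the LC hypothesis (e.g.\ sufficient Woodin cardinals, giving projective determinacy), every projective set in $M$ has the Baire property, yielding the contradiction with the non-Baire-measurability of $S$. Hence $M \vDash f \in L^1$ for every such $M$, and ZFC + LC $\vdash f \in L^1$ by completeness. For plain ZFC, the same scheme runs inside a Solovay-type generic extension (requiring at most an inaccessible in the ambient universe) where projective sets have the Baire property, delivering the conclusion in the ZFC case as well.

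\emph{Main obstacle.} The crux is extracting from the proof of Theorem \ref{T1}(c) the fact that the non-Baire-measurable witness $S$ is genuinely produced from $\mathcal{P}_0$ by a definable procedure at a controlled projective level, rather than invoking Theorem \ref{T1}(c) as a black box; without this, one cannot apply the projective regularity supplied by large cardinals or by Solovay. A secondary, minor issue is the passage from ``$f \in L^1$ in every model'' to the syntactic statement ``ZFC $\vdash f \in L^1$'', which is unproblematic because the arithmetic presentation of $E$ makes $f \in L^1$ an arithmetic sentence, decided once it holds in every model.
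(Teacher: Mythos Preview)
Your sufficiency direction is fine and matches the paper's (implicit) treatment.

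The necessity direction has the right architecture but a genuine gap in the regularity step. You claim that the non-Baire set $S$ built from $\mathcal{P}_0$ ``lies at a fixed finite level of the projective hierarchy in $M$, with complexity determined by $\psi$, $E$, and the recipe.'' This is false in general: the defining formula $\psi$ is an \emph{arbitrary} formula in the language of set theory, possibly quantifying over all sets (ordinals, power sets, etc.), not just over reals. Hence $\mathcal{P}_0$, and therefore $S$, is ordinal-definable but need not be projective at all. Projective determinacy (from Woodin cardinals) gives the Baire property only for projective sets, so your contradiction does not fire. The same defect recurs in your ZFC clause: a Solovay-type extension in which merely \emph{projective} sets have the Baire property does not touch an $S$ that is only OD.

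The paper closes this gap differently. It derives Theorem~\ref{T2} from Theorem~\ref{T2.5}, whose proof rests on a forcing lemma (Lemma~\ref{Solovay}, due to Solovay and sharpened by Shelah): for any countable model $M$ of $T$ (ZFC, or ZFC $+$ LC), there is a mild forcing extension $M'$ still satisfying $T$ in which \emph{every} set of reals that is $M'$-definable (with real parameters) has the Baire property --- no projective bound required. One starts from a countable $M\models T + f\notin L^1$, passes to $M'$, notes that $f\notin L^1$ persists because it is arithmetic, and observes that $\psi$ still uniquely defines some $\mathcal{P}_0\in Op(f^{\Join})$ inside $M'$. The explicit recipe (Proposition~\ref{providesum} and the proof of Theorem~\ref{T1}(c)) then produces an $M'$-definable set of reals that is not Baire measurable in $M'$, contradicting the choice of $M'$. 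The point is that one does not need to bound the complexity of $\psi$; one instead forces into a model where \emph{all} parameter-free definable sets of reals are Baire. Your ``main obstacle'' paragraph correctly locates the difficulty but misidentifies its resolution.
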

The following result strengthens the forward direction in
Theorem \ref{T2}.
\begin{Theorem}\label{T2.5} 
 Let $E\subset \ZZ^4$ be arithmetically presented, where ZFC proves that $E$ codes some  $f\in C((0,1])$. There is a definition which, provably in ZFC,
uniquely defines a function whose domain is a set of real numbers, with a value in  $Op(f^{\Join})$  if and only if ZFC proves $f\in L^1$. This also holds for
ZFC extended by the usual large cardinal hypotheses.
\end{Theorem}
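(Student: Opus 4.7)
If ZFC proves $f\in L^1$, then the Lebesgue integral operator $\mathcal L\colon g\mapsto\int_0^{\,\cdot}g$ is a ZFC-definable element of $Op(f^{\Join})$, and the trivial map $F\colon\{0\}\to\{\mathcal L\}$, $F(0)=\mathcal L$, is a ZFC-definable function with domain a set of reals whose (unique) value lies in $Op(f^{\Join})$.

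\textbf{The hard direction (plan).} Assume a ZFC formula $\phi$ provably uniquely defines a function $F$ with $\mathrm{dom}(F)\subseteq\RR$ and some $F(r)\in Op(f^{\Join})$. My plan is to upgrade $\phi$ to a ZFC formula $\psi$ which provably uniquely defines a \emph{specific} element of $Op(f^{\Join})$, and then to invoke Theorem~\ref{T2} to conclude that ZFC $\vdash f\in L^1$. To produce $\psi$, I will work inside the ZFC-definable inner model $\mathrm{HOD}$ of hereditarily ordinal-definable sets, which carries a canonical $\in$-definable well-ordering $<_{\mathrm{HOD}}$ of its reals. Since $E$ is arithmetically presented, and the ingredients defining ``$P\in Op(f^{\Join})$'' --- the smooth test-function class in~\eqref{eq:defspg}, the germ relation $\sim_0$, and the antiderivative condition $(\mathcal P_0g)'=g$ --- are absolute enough between $\mathrm{HOD}$ and $V$, the relativization $\phi^{\mathrm{HOD}}$ will still provably define $F^{\mathrm{HOD}}$ with some value in $Op(f^{\Join})^{\mathrm{HOD}}$. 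Letting $r^*$ be the $<_{\mathrm{HOD}}$-least such witness and $A^*:=F^{\mathrm{HOD}}(r^*)$, I will canonically extend $A^*$ from the HOD-version of $f^{\Join}$ to an element $\tilde A^*\in Op(f^{\Join})$ in $V$. The formula $\psi(X)\equiv\;X=\tilde A^*$ is then a ZFC formula provably defining an element of $Op(f^{\Join})$, and Theorem~\ref{T2} finishes the argument. The same plan works for ZFC extended by the usual large cardinal hypotheses, since $\mathrm{HOD}$ and $<_{\mathrm{HOD}}$ persist in such extensions.

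\textbf{Main obstacle.} The delicate step will be the canonical extension: building an element of $Op(f^{\Join,V})$ out of the HOD-operator $A^*$, which is a priori only defined on the smaller class $f^{\Join,\mathrm{HOD}}$ (because $V$ contains strictly more smooth test functions $h$ than $\mathrm{HOD}$). The natural tool is Note~\ref{nrange}: any $\mathcal P_0\in Op(f^{\Join})$ satisfies $\mathcal P_0(g)(y)=\mathcal P_0(g)(1)+\int_1^y g$, so it will suffice to specify the single real $\tilde A^*(g)(1)$ canonically for each $g\in f^{\Join,V}\setminus f^{\Join,\mathrm{HOD}}$, for instance via a limit from $A^*$ evaluated along an ordinal-definable HOD-sequence approximating $g$ uniformly on compacts of $(0,1]$. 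The hard part will be verifying that the resulting $\tilde A^*$ actually satisfies the ``based at $0^+$'' condition (I) and the antiderivative condition (II) of Definition~\ref{R1}; this is where the concrete arithmetic presentation of $f$ is used essentially, to guarantee that the approximation and the germ-at-$0^+$ behaviour interact in an absolute manner across the passage from $\mathrm{HOD}$ to $V$.
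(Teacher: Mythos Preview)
Your easy direction is fine. The hard direction has a fatal gap at exactly the spot you identify as the main obstacle.

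First, a structural point: in this paper Theorem~\ref{T2} is deduced \emph{from} Theorem~\ref{T2.5}, so invoking Theorem~\ref{T2} here is circular unless you supply an independent proof. More importantly, the extension $A^*\mapsto\tilde A^*$ cannot be carried out. By Note~\ref{nrange}, an element of $Op(f^{\Join})$ is determined by a choice of constant $C(g)\in\RR$ for each $g\in f^{\Join}$, subject to the constraint that $g_1\sim_0 g_2$ forces $C(g_1)-C(g_2)$ to equal a specific Lebesgue integral. Your $A^*$ supplies $C(g)$ only for $g\in f^{\Join,\mathrm{HOD}}$; the many $\sim_0$-classes in $V$ containing no $\mathrm{HOD}$ representative receive no information at all from $A^*$. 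The limit procedure you propose fails because $g\mapsto C(g)$ is wildly discontinuous: this is exactly what Proposition~\ref{providesum} and Theorem~\ref{THEOREM 7.} establish (any element of $Op(f^{\Join})$ with $f\notin L^1$ encodes a non-Baire-measurable summation operator on $\{0,1\}^\NN$), so there is no limit to take. Assigning $C(g)$ coherently across all the new $\sim_0$-classes in $V$ is precisely the original problem of building an element of $Op(f^{\Join,V})$; nothing has been reduced. And for your reduction to Theorem~\ref{T2} to go through, you would need ZFC to \emph{prove} $\tilde A^*\in Op(f^{\Join})$, which it therefore cannot.

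The paper's argument is entirely different and does not go through $\mathrm{HOD}$. One takes a countable model $M\models T+(f\notin L^1)$ (possible because ``$f\in L^1$'' is arithmetic in the given presentation, hence absolute), passes to a Solovay--Shelah forcing extension $M'$ (Lemma~\ref{Solovay}) in which every set of reals definable from a real parameter is Baire measurable, and observes that in $M'$ the hypothesized $\phi$ yields an element of $Op(f^{\Join})$ definable from a real, hence via Proposition~\ref{providesum} a real-parameter-definable element of $Su$, contradicting Theorem~\ref{THEOREM 7.}.
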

Examples of arithmetically presented functions include limits of pointwise convergent
sequences of rational polynomials, provided the sequence is algorithmically
computable. In addition, for algorithmically computable double sequences
of rational polynomials, the pointwise limit of pointwise limits is arithmetically presented, provided we have the requisite
pointwise convergence. Also, compositions of arithmetically presented functions are arithmetically presented. Of course, such functions may or may not be continuous. Standard elementary and special functions over the rationals are arithmetically presented.

Note that Theorem \ref{New11} rules out any description, even using unspecified real number parameters, for which it is provable in ZFC that for some choice of these parameters, the description uniquely defines an element of $Op(f^{\Join}),  f\notin L^1$. This rules out usable formulas
for such operators that can be proved to work
within the usual ZFC axioms for mathematics.

The proofs of Theorems \ref{T1} \eqref{T1b}, \eqref{T1c}  in \S\ref{S72}  rely on the
Interface Theorems from \S\ref{Intfl}.  The Interface Theorems
show how to go explicitly from any element of $ Op(f^{\Join}), f\notin L^1 $ to a
corresponding summation operator which maps $\{0,1\}^{\NN}$ into $\RR^\NN$.
From the point of view of descriptive set theory and
mathematical logic, it is easier to work with summation
operators. In \S\ref{sequences} we establish the
results about summation operators that we use in \S\ref{S72}, using standard techniques from descriptive set theory.

Theorem \ref{T1} \eqref{T1d}  is proved in \S\ref{S72} and Theorem \ref{T2.5} is proved in \S\ref{S6.6-}. Theorem \ref{T2} is an immediate consequence of Theorem \ref{T2.5}. Theorems \ref{T1} \eqref{T1d} and  \ref{T2.5} 
follow from Theorem \ref{T1} \eqref{T1c} using well known results
from mathematical logic.

The key to obtaining these negative mathematical logic results from the analytic questions is essentially the content of \S\ref{Intfl}.

\subsection{Links to other problems} \label{IVP}

\begin{Note}\label{NN17}
  {\rm Also, our results appear to preclude the existence of an integration operator over a sufficiently large class of functions defined on {\bf No}, the surreal numbers of J.H. Conway, \cite{Conway}. Indeed, if, say continuous functions extended past the gap at $\infty$ and an integral existed for them, then $\int_x^\infty f:=F\int_x^\omega f$ where $F$ is the finite part of a surreal number, would violate the conclusions of our theorems, since the proofs of existence of surreal objects are done using ``earliness'' and never  use  AC. 

The results of Note \ref{NN17} will appear elsewhere. 
}\end{Note}

\begin{Note}
  {\rm For the link with $p.f.$ see the Appendix.
}\end{Note}

\begin{Note}[Remarks about singular initial value problems]
{\rm 
   In the realm of ODEs with conditions placed at a singularity,  arguably the simplest  example is the linear ODE $f'=gf$   where $g$  is singular at zero. Can  conditions {\em at zero}  separate solutions?  In case $g$ is singular at zero, but $g\in L^1((0,1])$, then the answer is yes: the general solution is  $y=(x\mapsto C+\int_0^x g)$ and $y(0)=C$ is such a condition.  However, in the case $g\notin L^1$, the question reduces to the form considered in \S\ref{SSetting}--by elementary operations. Many other ODEs can brought to our setting. Such  questions will be treated elsewhere.
}\end{Note}

The Hadamard $p.f.$ in the complex domain is analyzed in the Appendix. The results are similar to the ones on $(0,1]$ and we will only prove two main ones.

\section{Interface theorems}\label{Intfl}
The precise notions of summation operators at infinity are given in Definitions \ref{defsum} and \ref{atinfty} below.
\subsection{Informal description} 

Consider  the Cantor set
\begin{equation}
  \label{eq:cant3}
\{0,1\}^\NN:=\{(a_i)_{i\in\NN}:a_i\in \{0,1\}\} 
\end{equation}
  For each of the sets $f^{\Join}$ and any nontrivial extension of integration we define  a summation operator {\em from $n$ to infinity} (based at infinity, in the sense of Definition \ref{atinfty} below,  on $\{0,1\}^\NN$  with values in $\RR^{\NN}$. Informally, this is  a finite-valued summation operator  with the property that for any two sequences which coincide eventually the sum also coincides eventually (see Proposition \ref{providesum}): \begin{equation}
  \label{eq:sumop}
(\exists \,N)(\forall n\ge N)( \ a_n=a'_n)\Rightarrow  (\exists N)(\forall n\ge N)\(\sum_{i=n}^\infty a_i=\sum_{i=n}^\infty a'_i\)
\end{equation}
Implausible as they might seem, such  operators exist assuming AC. They are a byproduct of extensions of $p.f.$ (e.g. to the whole of $C^{\infty}((0,1])$ with {\em no growth or regularity condition} at zero), which also exist assuming  AC.   As expected, such a  summation  is pathological and no formula can exist for it.

To formulate negative results about the summation operator and for proving them, descriptive set theory and mathematical logic are used non-trivially.
\subsection{Detailed  results}\label{details2}  
\begin{Note}
 {\rm A summation operator acting on the sequence $x_n$ is naturally defined as a solution of the recurrence $s_{n+1}-s_n=x_n$. Clearly two solutions differ by a  constant. This motivates the following.} 
\end{Note}
\begin{Definition}\label{defsum}
   Let $ x \in \{0,1\}^\NN$. The standard summation for $x$,
written $\Sigma(x)$, is $(x_0,x_0+x_1,x_0+x_1+x_2,...)$. $\mathcal{S}$ is a summation
operator if and only if $\mathcal{S}:\{0,1\}^\NN \to \RR^\NN$, where for all $x \in
\{0,1\}^\NN$ there exists $c \in \RR$ such that $\mathcal{S}(x) = \Sigma(x) + c$. I.e.,
$\mathcal{S}(x)$ is $\Sigma(x$) with $c$ added to all terms of $\Sigma(x)$.
\end{Definition}
\begin{Definition}\label{atinfty}
   For any set $X$, $X^{\NN}$ is the set of all $f:N \to X$,
which is the same as the set of all infinite sequences from
$X$ indexed from $0$. For $x,y \in X^{\NN}$, $x \sim_{\infty} y$ if and only if
$(\exists n)(\forall m \ge n)(x_m = y_m)$. $F:X^{\NN} \to Y^{\NN}$ is based at infinity if and
only if for all $x,y \in X^{\NN}$, $x \sim_{\infty} y \Rightarrow F(x) \sim_{\infty} F(y)$. We use $[x]_{\infty}$
for $\{y \in X^{\NN}: x \sim_{\infty} y\}$.
\end{Definition}
\begin{Definition}
   $ Su $ is the collection of all summation
operators on $\{0,1\}^\NN$ based at infinity.

\end{Definition}

The following two Propositions follow easily from Proposition \ref{providesum} and its proof.
\begin{Proposition}\label{PZF}
  ZF proves the following: If $f\notin L^1\text{ and }Op(f^{\Join})\ne\emptyset$ then $ Su \ne\emptyset$.
\end{Proposition}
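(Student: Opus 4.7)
The plan is to convert the given antiderivative operator $\mathcal{P}_0\in Op(f^{\Join})$ into a summation operator $\mathcal{S}\in Su$ by encoding each binary sequence $a=(a_n)\in\{0,1\}^{\NN}$ as a specific function $g_a\in f^{\Join}$, in such a way that the integrals of $g_a$ over a fixed partition of $(0,1]$ pick out the bits $a_n$; applying $\mathcal{P}_0$ and evaluating at the partition points then telescopes to the partial sums of $a$, and the $0^+$-basedness of $\mathcal{P}_0$ translates directly into $\infty$-basedness of $\mathcal{S}$.

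First I would invoke Proposition \ref{L10} to fix $\tau\in f^{\Join}$ with $\tau\ge 0$ and $\int_0^1\tau=\infty$, writing $\tau=h_0 f$ for the accompanying smooth $h_0$ with $|h_0|\le 1$. Because $\int_0^y\tau=\infty$ for every $y>0$, the intermediate value theorem supplies a decreasing sequence $1=y_0>y_1>\cdots\to 0$ with $\int_{y_{n+1}}^{y_n}\tau=2$ (take $y_{n+1}$ as large as possible to obtain a canonical choice). On each $I_n:=(y_{n+1},y_n)$ I then pick, using IVT on the width of a rescaled fixed mollifier centered at the midpoint of $I_n$, a smooth bump $\sigma_n$ compactly supported in $I_n$ with $0\le\sigma_n\le 1$ and $\int_{y_{n+1}}^{y_n}\sigma_n\tau=1$; this is possible because $\int_{y_{n+1}}^{y_n}\tau=2$ strictly exceeds the target $1$. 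For each $a\in\{0,1\}^{\NN}$ set
\[
h_a:=h_0\sum_{n\ge 0}a_n\sigma_n,\qquad g_a:=h_a f,\qquad \mathcal{S}(a)_n:=-\mathcal{P}_0(g_a)(y_{n+1}).
\]
Disjointness of the supports of the $\sigma_n$ combined with $|h_0|\le 1$ gives $h_a\in C^{\infty}((0,1])$ and $|h_a|\le 1$, so $g_a\in f^{\Join}$ and $\mathcal{P}_0(g_a)$ is defined.

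The remaining verifications are routine. For the summation identity: by Note \ref{nrange}, $\mathcal{P}_0(g_a)(y_n)-\mathcal{P}_0(g_a)(y_{n+1})=\int_{y_{n+1}}^{y_n}g_a$; on $I_n$ only the $n$-th summand of $h_a$ is active, so this integral equals $a_n\int_{y_{n+1}}^{y_n}\sigma_n\tau=a_n$, whence $\mathcal{S}(a)_n-\mathcal{S}(a)_{n-1}=a_n$ for $n\ge 1$ and telescoping gives $\mathcal{S}(a)=\Sigma(a)+c(a)$ with $c(a):=\mathcal{S}(a)_0-a_0$, so $\mathcal{S}(a)$ is a summation operator in the sense of Definition \ref{defsum}. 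For $\infty$-basedness: if $a\sim_\infty a'$ with $a_n=a'_n$ for $n\ge N$, then $h_a$ and $h_{a'}$ can differ only inside $\bigcup_{n<N}I_n\subset(y_N,1]$, so $g_a=g_{a'}$ on $(0,y_N]$; hence $g_a\sim_0 g_{a'}$, property (I) gives $\mathcal{P}_0(g_a)\sim_0\mathcal{P}_0(g_{a'})$, and the two antiderivatives agree on some $(0,b]$ with $b>0$, forcing $\mathcal{S}(a)_n=\mathcal{S}(a')_n$ whenever $y_{n+1}<b$.

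The main obstacle is the bump construction: $\sigma_n$ must be smooth, strictly supported inside $I_n$, bounded by $1$, and have a prescribed $\tau$-weighted integral equal to $1$. Building headroom into the partition by asking for $\int\tau=2$ rather than $=1$ on each block lets IVT on the bump width deliver the $\sigma_n$ canonically. Everything else reduces to bookkeeping with Note \ref{nrange} and the germ relation $\sim_0$, and since the whole construction selects only a single sequence of real parameters, the argument runs in ZF.
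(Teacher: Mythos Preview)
Your argument is correct and follows the same strategy as the paper's Proposition~\ref{providesum}: encode each $a\in\{0,1\}^{\NN}$ as a function in $f^{\Join}$ built from $\tau$ and smooth cutoffs on a $\tau$-calibrated partition, then read off partial sums from values of $\mathcal{P}_0$ at the partition points, with $\infty$-basedness of $\mathcal{S}$ inherited from $0^+$-basedness of $\mathcal{P}_0$. Your version is in fact slightly tidier than the paper's: by building in headroom ($\int_{I_n}\tau=2$) and hitting $\int\sigma_n\tau=1$ exactly, you avoid the Lebesgue correction term the paper carries in \eqref{defsigma}. One small caveat on the bump step: a literally rescaled fixed mollifier centered at the midpoint need not achieve $\int\sigma\tau\ge 1$ before its support reaches $\partial I_n$ (think of $\tau$ concentrated near the endpoints of $I_n$); use instead a plateau bump with shrinking transition zones so that $\int\sigma_n^{\epsilon}\tau\to 2$ as $\epsilon\to 0^+$, and then IVT on $\epsilon$ delivers the canonical $\sigma_n$ you want.
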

\begin{Proposition}\label{PZFDC}
ZFDC \footnote{DC is used to prove basic facts about  Borel measurability.}  proves the following: if there is a Borel measurable  element of some $ Op(f^{\Join}), f\notin L^1$ then there is a Borel  measurable  element of $ Su $.
\end{Proposition}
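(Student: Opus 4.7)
The plan is to make the construction underlying Proposition \ref{PZF} (and hence Proposition \ref{providesum}) \emph{continuous} in its dependence on the input sequence, so that Borel measurability of the chosen $\mathcal{P}_0$ transfers to the induced summation operator.

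First I would invoke Proposition \ref{L10} to obtain $\tau\in f^{\Join}$ with $\tau\ge 0$ and $\int_0^1\tau=\infty$. Since $\tau$ is continuous on $(0,1]$ with infinite integral, I can choose inductively a strictly decreasing sequence $1=b_0>b_1>\cdots\to 0$ together with smooth cut-offs $\phi_n\in C^\infty((0,1])$, $0\le\phi_n\le 1$, each supported in a compact subinterval of $(b_{n+1},b_n)$, arranged so that $\int_{b_{n+1}}^{b_n}\phi_n\tau=1$; this normalization is obtained by taking each $(b_{n+1},b_n)$ with enough $\tau$-mass and then applying the intermediate value theorem to a one-parameter family of bumps. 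Define
\[
   \Phi(a)=\tau\sum_{n} a_n\phi_n,\qquad a=(a_n)\in\{0,1\}^{\NN}.
\]
Since the supports of the $\phi_n$ are pairwise disjoint, $h_a:=\sum_n a_n\phi_n$ lies in $C^\infty((0,1])$ with $\|h_a\|_\infty\le 1$, so $\Phi(a)\in f^{\Join}$.

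Next I would record the two properties of $\Phi$ that drive the argument. Any compact $K\subset(0,1]$ meets only finitely many supports of the $\phi_n$, so $\Phi(a)|_K$ depends on only finitely many coordinates of $a$; this makes $\Phi:\{0,1\}^{\NN}\to \mathbb{P}$ continuous. Also, if $a\sim_\infty a'$ with $a_k=a'_k$ for all $k\ge N$, then $\Phi(a)-\Phi(a')$ is supported in $(b_N,1]$, so $\Phi(a)\sim_0\Phi(a')$. Now set
\[
   \mathcal{S}(a)_n := -\mathcal{P}_0(\Phi(a))(b_{n+1}).
\]
By Note \ref{nrange}, $\mathcal{P}_0(\Phi(a))(b_{n+1})=\int_1^{b_{n+1}}\Phi(a)+C(\Phi(a))=-\sum_{k=0}^{n}a_k+C(\Phi(a))$, hence $\mathcal{S}(a)_n=\Sigma(a)_n-C(\Phi(a))$, so $\mathcal{S}$ is a summation operator in the sense of Definition \ref{defsum}. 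Since $\mathcal{P}_0$ is based at $0^+$ and $b_{n+1}\to 0$, $a\sim_\infty a'$ forces $\mathcal{S}(a)\sim_\infty\mathcal{S}(a')$, giving $\mathcal{S}\in Su$.

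Finally I would verify Borel measurability. By hypothesis $\mathcal{P}_0:f^{\Join}\to\mathbb{P}$ is Borel measurable (with $f^{\Join}\subset\mathbb{P}$ carrying the induced Borel structure), and $\Phi$ is continuous, so $a\mapsto\mathcal{P}_0(\Phi(a))$ is Borel measurable into $\mathbb{P}$. Point-evaluation at $b_{n+1}$ is continuous on $\mathbb{P}$, hence each $\mathcal{S}(a)_n$ is Borel measurable as a real-valued function on $\{0,1\}^{\NN}$, and therefore $\mathcal{S}:\{0,1\}^{\NN}\to\RR^{\NN}$ is Borel. DC is used only for the standard closure properties of the Borel $\sigma$-algebra and of Borel measurable functions on Polish spaces. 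The one mildly delicate step is the exact normalization $\int_{b_{n+1}}^{b_n}\phi_n\tau=1$ when $\tau$ may have large zero sets near $0$; everything else is routine descriptive-set-theoretic bookkeeping once Proposition \ref{L10} and the continuity of $\Phi$ are in hand.
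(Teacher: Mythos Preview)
Your argument is correct and follows the same overall strategy as the paper's proof of Proposition~\ref{providesum}: embed $\{0,1\}^{\NN}$ into $f^{\Join}$ via smooth cutoffs of $\tau$, normalized so that the $n$-th block contributes exactly $a_n$ to partial integrals, then evaluate $\mathcal{P}_0$ at a sequence of points tending to $0$ and read off a summation operator.

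The implementation differs in a useful way. The paper first fixes the partition $(\alpha_k)$ with $\int_{\alpha_{k+1}}^{\alpha_k}\tau=1$ (Lemma~\ref{Cantor2}), then for each $a$ builds a smooth $h$ that approximates $\chi_{\mathcal{O}_a^c}$ and adds a Lebesgue correction term $\int_0^{\alpha_k}(\chi_{\mathcal{O}_a^c}\tau-\tau h)$ to compensate for the smoothing; Borel measurability is then asserted as ``manifest.'' You instead fix the bumps $\phi_n$ once and for all with $\int\phi_n\tau=1$ and make the dependence on $a$ linear, $\Phi(a)=\tau\sum_n a_n\phi_n$. This buys you two things: no correction term is needed, and continuity of $\Phi:\{0,1\}^{\NN}\to\mathbb{P}$ is immediate (local finiteness of the supports), so Borel measurability of $\mathcal{S}$ follows transparently from the composition $a\mapsto\Phi(a)\mapsto\mathcal{P}_0(\Phi(a))\mapsto$ evaluation. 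Your caveat about the normalization is harmless: since $\int_0^1\tau=\infty$ and $\tau$ is continuous, one can always choose $b_{n+1}$ with $\int_{b_{n+1}}^{b_n}\tau>1$ on a compact subinterval and then scale a fixed bump by a factor in $[0,1]$ via the intermediate value theorem.
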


\subsection{Proof of Proposition \ref{L10}} \label{S43}
\begin{proof} 
A similar construction will be used in the proof of Proposition \ref{providesum}. By standard measure theory, if $f\notin L^1$, then $\int f^+$ or $\int f^-$ is $+\infty$ where $f^\pm$ are the positive/negative parts of $f$.  Assume  $\int f^+=+\infty$ (if $\int f^-=+\infty$, the construction is essentially the same, with minor modification such as $h\leftrightarrow -h$.). Define $G=\{x\in (0,1]:f(x)\ge \frac12$ and   $L=\{x\in (0,1]:f(x)\le  \frac14$\}; they are clearly both closed in the relative topology on $(0,1]$.  Let $c_0=\max G$ and $b_0= \max\{x\in L:x\le c_0\}$. Inductively, let $c_j=\max\{x\in G:x\le b_{j-1}\}$ and $b_j=\max\{x\in L:x\le c_j\}$. Also inductively, let $\epsilon_j<\min\{\frac12(c_j-b_j),\frac12(b_j-c_{j+1}))\}$ and define $h\in C^\infty((0,1])$ be a gluing function s.t. $h=1$ on $[b_j,c_j]$ and $h=0$ on $[c_{j+1}+\epsilon, b_j-\epsilon]$. It is clear that $\tau:=hf\ge 0$ and $\int_0^1 (f^+-\tau)<1$ where $\int$ is the usual Lebesgue integral.  
\end{proof}
\subsection{Proof of Proposition \ref{P44}} \label{explcP}
\begin{proof} If $p=\infty$ then clearly $w|\tau|\le 1$ implies $\int |\tau|=\int w|\tau| w^{-1}\le \int_0^1 w^{-1}<\infty $ if   $\int_0^1 (1/w)<\infty$. If, instead,  $\int_0^1 (1/w)=\infty$ then clearly $\tau:=1/w\in L^\infty_w$ while $\int\tau=\infty.$  An  explicit $\tau$ with $\|\tau\|_1=\infty$ is  $w^{-1}$.

If $p=1$ then  we assume, without loss of generality, that $w\le 1$. If $w(0)\ne 0$, then $w, 1/w$ are bounded, $L^1_w=L^1$,  and  there is nothing to prove. Otherwise, let $A_k=\{x:w(x)\in [1/k^2,1/(k+1)^2)\}$ and $\tau=\sum \bchi_{A_k}/m(A_k)$ where $m$ is the usual Lebesgue measure. Clearly,  the $A_k$  are disjoint and 
$\int_{[0,1]} \tau w\le \sum_k{k^{-2}}<\infty$ while $\int_0^1 \tau =\sum_{k=1}^\infty 1=\infty$. This $\tau$ can be made smooth as in Lemma~\ref{L10}.

Let $p\in (1,\infty)$ and assume $\int_0^1 w^{-\frac{p}{p-1}}<\infty$ and let $\tau\ge 0\in L^p_w$. Then, by H\"older's inequality we have
  \begin{equation}
    \label{eq:holder}
    \int_0^1\tau= \int_0^1[w \tau]w^{-1}\le \(\int_0^1(w\tau)^p\)^{\frac1p} \(\int_0^1w^{-\frac{p}{p-1}}\)^{\frac1q}<\infty
  \end{equation}
Conversely, assume that $\int_0^1 w^{-\frac{p}{p-1}}=\infty$. Straightforward calculations show that $\tau(x)=w(x)^{-\frac{p}{p-1}}/\int_x^1w^{-\frac{p}{p-1}}$ satisfies our requirements.
\end{proof}

\subsection{Proof of Proposition \ref{Ck}}\label{SCk} $J_1$, integration from $1$, is continuous and injective on $\mathbb{P}$.  Hence, by \cite{Kechris} \footnote{See Corollary 15.2, p.89.}, for any $ \mathcal{O}\subset \mathbb{P}$ open, $J_1(\mathcal{O})$ is Borel measurable. Let $D:C^1((0,1])\mapsto C((0,1])$ be the usual differentiation operator. We claim that $D$  is Borel measurable. Let $\mathcal{O}\in\mathbb{P}$ be open. Then, by elementary calculus, $f\in D^{-1}(\mathcal{O})$ iff $f\in H^{-1}(J_1(\mathcal{O}))$ where $H:=f\mapsto f-f(1)$  is  continuous on $\mathbb{P}$. By calculus, $\forall k\in\NN$, we have  $C^k((0,1])=(D^k)^{-1}(\mathbb{P})$. Hence $\forall k\in \NN$, $C^k((0,1])$  (and  $C^\infty((0,1])=\cap_{k\in\NN} C^k((0,1])$) are  Borel measurable. \footnote{In fact, $C^k$, $k\in\NN\cup\{\infty\}$  is  $\boldsymbol{\Pi}^0_3$-complete, see  \cite{Marcone} and \cite{Kechris}, \S 23 D.} $\qed$.

\begin{Lemma}\label{Cantor2}
Given any $f^{\Join}, f\notin L^1$, a decreasing sequence $(\alpha_k)_{k\in\NN}$ in $(0,1]$  can be defined in terms of $f$ such that
\begin{equation}
  \label{eq:intab}
  \int_{\alpha_k}^{\alpha_{k+1}}\tau=1
\end{equation}
where $\tau$ is constructed as in Proposition \ref{L10}.
\end{Lemma}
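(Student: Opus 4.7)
The strategy is to use Proposition \ref{L10} to obtain an explicit nonnegative $\tau \in f^{\Join}$, defined in terms of $f$, with $\int_0^1 \tau = \infty$, and then to carve the interval $(0,1]$ into countably many consecutive pieces, each carrying unit $\tau$-mass. Since $\tau \ge 0$ is continuous on $(0,1]$ and has infinite integral as we approach $0$, the tail-integral function $F(x) := \int_x^1 \tau$ is continuous on $(0,1]$, nonincreasing, satisfies $F(1)=0$, and $F(x) \to +\infty$ as $x \to 0^+$. This is the engine that lets us cut off unit amounts of mass by applying the intermediate value theorem on descending truncations.

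Concretely, I set $\alpha_0 := 1$, and inductively, once $\alpha_k \in (0,1]$ is constructed, I look at the continuous function $F_k(x) := \int_x^{\alpha_k} \tau$ on $(0,\alpha_k]$. Because $F_k$ is continuous, nonincreasing, with $F_k(\alpha_k)=0$ and $F_k(x)\to \infty$ as $x\to 0^+$, the IVT produces some point with $F_k$-value equal to $1$. To make the construction canonical (definable in terms of $f$, hence in terms of $\tau$), I set
\begin{equation*}
\alpha_{k+1} := \sup\{ x \in (0,\alpha_k) : F_k(x) \ge 1 \},
\end{equation*}
which is a well-defined real number in $(0,\alpha_k)$, and by continuity satisfies $F_k(\alpha_{k+1}) = 1$, i.e.\ $\int_{\alpha_{k+1}}^{\alpha_k}\tau = 1$ (which is what the displayed equation \eqref{eq:intab} asserts once one reads the integral in the natural orientation for $\tau\ge 0$). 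The recursion uses only arithmetic and suprema over $\tau$-data, so $(\alpha_k)_k$ is defined in terms of $f$.

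It remains to verify the stated properties. By construction $0 < \alpha_{k+1} < \alpha_k$, so the sequence lies in $(0,1]$ and is strictly decreasing. It is also automatic that $\alpha_k \to 0$: if instead $\alpha_k \downarrow a > 0$, then $\int_a^1 \tau = \sum_{k=0}^{\infty}\int_{\alpha_{k+1}}^{\alpha_k}\tau = \sum_{k=0}^\infty 1 = \infty$, contradicting that $\tau$ is continuous, hence bounded, on the compact set $[a,1]$.

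The main (very mild) obstacle is that $\tau$ produced by Proposition \ref{L10} may vanish on subintervals (it is $hf^+$ for a gluing function $h$), so $F_k$ need not be strictly decreasing and the solution of $F_k(x)=1$ need not be unique. The canonical choice $\alpha_{k+1}=\sup\{x:F_k(x)\ge 1\}$ sidesteps this and preserves definability from $f$, which is essential for the use of this lemma in the Interface Theorems of \S\ref{Intfl}.
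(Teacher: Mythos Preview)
Your proof is correct and follows essentially the same route as the paper: both use the tail integral $\theta(x)=\int_x^1\tau$ and define $\alpha_k$ by the condition $\theta(\alpha_k)=k$ (the paper writes $\alpha_k=\theta^{-1}(k)$ directly). You are in fact more careful on one point: the paper asserts that $\theta$ is \emph{strictly} decreasing and hence invertible, whereas the $\tau$ of Proposition~\ref{L10} vanishes on the intervals where the gluing function $h$ is zero, so $\theta$ is merely nonincreasing; your canonical choice $\alpha_{k+1}=\sup\{x:F_k(x)\ge 1\}$ cleanly resolves this and preserves definability from $f$.
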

\begin{proof}
 The function $\tau$ constructed in Proposition \ref{L10} has the property that $\theta(x)=\int_x^1\tau$ is strictly decreasing, thus continuously invertible from $\RR^+$ into $\RR^+$. We let $\alpha_0 =1$,  $\alpha_k=\theta^{-1}(k), k\ge 1$ and note that \eqref{eq:intab} holds.
\end{proof}

\begin{Proposition}\label{providesum}
There is an explicit procedure for going from any element of $Op(f^{\Join}),f\notin L^1$ to an element of $Su$. Hence Propositions \ref{PZF} and \ref{PZFDC} hold.
\end{Proposition}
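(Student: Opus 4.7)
The plan is to encode arbitrary $a \in \{0,1\}^\NN$ as functions in $f^{\Join}$, apply $\mathcal{P}_0$, and extract a summation operator by evaluating the antiderivative at a decreasing sequence of test points accumulating at $0$. First, using Proposition~\ref{L10} and Lemma~\ref{Cantor2} (run with spacing $2$ in place of $1$), fix $\tau = hf \ge 0$ with $h \in C^\infty((0,1])$, $0 \le h \le 1$, and a decreasing sequence $1 = \alpha_0 > \alpha_1 > \cdots \to 0$ with $\int_{\alpha_{k+1}}^{\alpha_k}\tau = 2$. For each $k$ choose a smooth cutoff $\eta_k \in C^\infty((0,1])$ with $0 \le \eta_k \le 1$ and $\mathrm{supp}(\eta_k) \subset (\alpha_{k+1},\alpha_k)$, adjusting the width of its plateau by an intermediate value argument so that $\int_0^1 \eta_k \tau = 1$ exactly (the admissible range as the cutoff width varies is $(0,2)$, which contains $1$). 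Put
\[
  \phi_k := h\,\eta_k \in C^\infty((0,1]),\qquad 0 \le \phi_k \le 1,\qquad \mathrm{supp}(\phi_k) \subset (\alpha_{k+1},\alpha_k),
\]
so that $\int_0^1 \phi_k f = \int_0^1 \eta_k \tau = 1$.

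Second, for each $a = (a_k) \in \{0,1\}^\NN$, set
\[
  h_a := h\sum_{k \ge 0} a_k \eta_k,\qquad g_a := h_a f = \sum_{k \ge 0} a_k \eta_k \tau.
\]
The series defining $h_a$ is locally finite on $(0,1]$, and the $\eta_k$ have pairwise disjoint supports, so $h_a \in C^\infty((0,1])$ with $\|h_a\|_\infty \le 1$; hence $g_a \in f^{\Join}$. Let $F_a := \mathcal{P}_0(g_a) \in C^1((0,1])$. By Note~\ref{n21}, and because only $\eta_n$ contributes on $(\alpha_{n+1},\alpha_n)$,
\[
  F_a(\alpha_n) - F_a(\alpha_{n+1}) \;=\; \int_{\alpha_{n+1}}^{\alpha_n} g_a \;=\; a_n \int_0^1 \eta_n\tau \;=\; a_n.
\]
Define $\mathcal{S}(a)_n := -F_a(\alpha_{n+1})$; telescoping yields $\mathcal{S}(a)_n = \Sigma(a)_n - F_a(1)$, so $\mathcal{S}(a) = \Sigma(a) + c_a$ with $c_a := -F_a(1) \in \RR$. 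Thus $\mathcal{S}$ is a summation operator in the sense of Definition~\ref{defsum}.

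Third, to check that $\mathcal{S}$ is based at infinity, suppose $a \sim_\infty a'$ with $a_k = a'_k$ for all $k \ge N$. Then $h_a - h_{a'}$ is supported in $\bigcup_{k<N} \mathrm{supp}(\eta_k) \subset (\alpha_N, 1]$, so $g_a = g_{a'}$ on $(0,\alpha_N]$, i.e.\ $g_a \sim_0 g_{a'}$. Since $\mathcal{P}_0$ is based at $0^+$, $F_a \sim_0 F_{a'}$, and as $\alpha_{n+1} \to 0$ we get $F_a(\alpha_{n+1}) = F_{a'}(\alpha_{n+1})$ for all sufficiently large $n$, so $\mathcal{S}(a) \sim_\infty \mathcal{S}(a')$. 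This places $\mathcal{S}$ in $Su$ without any appeal to choice, proving Proposition~\ref{PZF}. For Proposition~\ref{PZFDC}, observe that the map $a \mapsto g_a$ from the Cantor space $\{0,1\}^\NN$ into $\mathbb{P}$ is continuous, because its restriction to any compact subset of $(0,1]$ depends on only finitely many of the coordinates $a_k$. Hence if $\mathcal{P}_0$ is Borel measurable, so is the composition $a \mapsto F_a \mapsto \bigl(F_a(\alpha_{n+1})\bigr)_{n \ge 0}$, making $\mathcal{S}$ Borel measurable.

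The main technical point is the construction of the $\eta_k$: smoothness and $\eta_k \le 1$ forbid literal indicators of $[\alpha_{k+1},\alpha_k]$, so the integral $1$ must be hit exactly by an IVT argument. The slack $\int_{\alpha_{k+1}}^{\alpha_k}\tau = 2$ is what makes this IVT work within a single cell, while keeping the supports of the $\eta_k$ pairwise disjoint -- itself crucial for the bound $\|h_a\|_\infty \le 1$ and thus for membership $g_a \in f^{\Join}$.
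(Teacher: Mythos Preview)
Your proof is correct and follows the same architecture as the paper's: encode $a\in\{0,1\}^\NN$ as an element $g_a\in f^{\Join}$ supported on the cells $(\alpha_{k+1},\alpha_k)$, apply $\mathcal{P}_0$, and read off a summation operator from the values at the $\alpha_k$. The only substantive difference is how the exact identity $\int_{\alpha_{n+1}}^{\alpha_n} g_a = a_n$ is arranged. The paper takes $\int_{\alpha_{k+1}}^{\alpha_k}\tau=1$, uses a smooth approximation $\tau h$ to $\chi_{\mathcal{O}_a^c}\tau$, and then adds an $L^1$ correction term $\int_0^{\alpha_k}(\chi_{\mathcal{O}_a^c}\tau-\tau h)$ to recover the exact values. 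You instead double the spacing to $\int_{\alpha_{k+1}}^{\alpha_k}\tau=2$ and use the intermediate value theorem to hit $\int \eta_k\tau=1$ on the nose, eliminating the correction term altogether. Your variant is slightly cleaner; the paper's avoids the need for the IVT selection to be made canonically (a point you should perhaps make explicit for the ZF claim, e.g.\ by parametrizing the plateau width and taking the least parameter achieving integral $1$). Either way the Borel-measurability argument goes through, since in both versions $a\mapsto g_a$ is continuous into $\mathbb{P}$ and evaluation at the $\alpha_k$ is continuous out of it.
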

\begin{proof}
  For $\tau \in T$ s.t. \eqref{eq:cdpm} holds, consider the open set 
$$\mathcal{O}_a=\bigcup_{n:a_n=0} (\alpha_{n+1},\alpha_n)$$
With  $\alpha_k$ as in  \eqref{eq:intab} we construct a $C^\infty$ function $h$ out of  $\tau$ from which is $1$ on $\mathcal{O}_a^c$ and is zeroed out as in Lemma \ref{L10} with the role of $(b_{n+1},c_n)$ played by $(\alpha_{n+1},\alpha_n)$ dividing now $\epsilon_n$ by $2\max_{(\alpha_{n+1},\alpha_n)}\tau$ \footnote{Nonzero by \eqref{eq:intab}.}. This $h$  has the property that the Lebesgue integral $\int_{0}^1 |\bchi_{\mathcal{O}^c_a}\tau-\tau h|<\frac12$. Define 
  \begin{equation}\label{defsigma}
 x_{k;a}=\left[\mathcal{P}_0 (\tau h)\right] (\alpha_k)+\int_0^{\alpha_k}(\bchi_{\mathcal{O}_a^c}\tau-\tau h)
  \end{equation}
where the last integral is the Lebesgue integral.  By construction, Note \ref{nrange}, \eqref{n21} and \eqref{defsigma}, we have $x_{n+1;a}-x_{n;a}=\int_{\alpha_{n+1}}^{\alpha_n}\tau\bchi_{\mathcal{O}_a^c}$=$\int_{\alpha_{n+1}}^{\alpha_n}\tau=a_n$. Thus 
$$\mathcal{S}=a\mapsto (x_{0;a},x_{0;a}+x_{1;a},x_{0;a}+x_{1;a}+x_{2;a},...)$$
 is a summation operator on $\{0,1\}^{\NN}$. Proposition \ref{PZF} follows since all constructions have been done in $ZF$. Proposition \ref{PZFDC} follows from the fact that the maps used in the constructions in this proof are manifestly Borel measurable.
\end{proof}

\begin{Note}\rm{
  We remark that all the proofs in \S\ref{Intfl} are carried out in ZF. 
}\end{Note}
\section{Summation operators}\label{sequences}
Until the proof of Theorem \ref{THEOREM 7.} is complete, we fix a
summation operator $ \mathcal{S}:\{0,1\}^{\NN} \to \RR^\NN$, see Definition 
\ref{defsum}, based at infinity, and
prove that $\mathcal{S}$ is not Baire measurable. We assume that $\mathcal{S}$ is
Baire measurable, and obtain a contradiction. The proof
takes place within ZFDC, and is an application of a widely
used technique from descriptive set theory.
For useful information about Baire spaces and Baire
category, we refer the reader to Kechris, \cite{Kechris}, section 8.
\begin{Definition}
  Let $f:X \to Y$, where $X,Y$ are topological
spaces, and $E\subseteq X$. We say that $f$ is continuous over $E$
if and only if  $f$  restricted to  $E$ is a continuous
function where  $E$ is given the subspace (i.e., induced) topology.
\end{Definition}
\begin{Lemma}[\cite{Kechris}, 8.38 p. 52]\label{L1.1}
 Let $X$ be a Baire space and $Y$ be a second countable space and assume $f:X\to Y$ is  Baire measurable. Then $f$ is continuous over a comeager subset of $X$.
\end{Lemma}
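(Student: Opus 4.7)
The plan is to exploit the second countability of $Y$ to reduce Baire measurability of $f$ to countably many open-modulo-meager approximations simultaneously, then delete the union of the meager pieces to obtain a comeager set on which $f$ becomes continuous.

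First I would fix a countable basis $\{V_n\}_{n\in\NN}$ for the topology on $Y$. By hypothesis, $f$ is Baire measurable, so each preimage $f^{-1}(V_n)$ has the Baire property in $X$: there exists an open set $U_n\subseteq X$ and a meager set $M_n\subseteq X$ with
\[
 f^{-1}(V_n)\,\triangle\, U_n \;=\; M_n.
\]
Setting $M := \bigcup_{n\in\NN} M_n$, a countable union of meager sets, $M$ is meager, and therefore $G := X\setminus M$ is comeager (and dense, since $X$ is a Baire space).

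Next I would check that $f\!\restriction\!G$ is continuous in the subspace topology on $G$. It suffices to verify that the preimage of every basic open set $V_n\subseteq Y$ is relatively open in $G$. For any $x\in G$, the definition of $M$ gives $x\notin M_n$, hence $x\in f^{-1}(V_n) \iff x\in U_n$. Consequently
\[
 (f\!\restriction\! G)^{-1}(V_n) \;=\; f^{-1}(V_n)\cap G \;=\; U_n\cap G,
\]
which is open in the subspace topology on $G$. Since the $V_n$ generate the topology of $Y$, preimages of all open subsets of $Y$ under $f\!\restriction\!G$ are relatively open, so $f\!\restriction\!G$ is continuous.

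There is no real obstacle here; the only mildly delicate point is the precise notion of Baire measurability being used (preimages of open sets have the Baire property, i.e.\ differ from an open set by a meager set), together with the observation that second countability allows the approximating open sets $U_n$ to be chosen once per basis element, so that the exceptional meager sets $M_n$ amalgamate into a single meager $M$. The argument is carried out entirely in ZFDC, which suffices since all appeals to "countable union of meager sets is meager" and to the basic theory of Baire category are available there.
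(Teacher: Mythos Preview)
Your argument is correct and is essentially the standard proof (the one found in Kechris, 8.38). The paper itself does not supply a proof of this lemma; it simply cites it from \cite{Kechris}, so there is nothing further to compare.
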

\begin{Lemma}\label{L1.2}
  Let $f:X \to X$  be a bicontinuous bijection,
where $X$ is a Baire space. If $E\subseteq X$   is comeager then $f^{-1}(E)$ is comeager.
\end{Lemma}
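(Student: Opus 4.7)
The plan is to unfold the definition of comeagerness all the way down to nowhere density, and show that the inverse image (under a homeomorphism) of a nowhere dense set is nowhere dense. Recall that $E$ is comeager in $X$ iff its complement is meager, i.e.\ a countable union $\bigcup_n N_n$ of nowhere dense sets $N_n$ (sets whose closure has empty interior). Since $f^{-1}$ commutes with complementation and countable unions, we have
\[
X\setminus f^{-1}(E)=f^{-1}(X\setminus E)=\bigcup_n f^{-1}(N_n),
\]
so it suffices to prove that $f^{-1}(N)$ is nowhere dense whenever $N\subseteq X$ is nowhere dense.

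Fix such an $N$. Because $f$ is continuous, $f^{-1}(\overline{N})$ is closed and contains $f^{-1}(N)$, so $\overline{f^{-1}(N)}\subseteq f^{-1}(\overline{N})$. Thus it is enough to show that $f^{-1}(\overline{N})$ has empty interior. Suppose, toward a contradiction, that some nonempty open $U\subseteq X$ satisfies $U\subseteq f^{-1}(\overline{N})$. Since $f$ is a bicontinuous bijection, $f^{-1}$ is continuous, equivalently $f$ is an open map; hence $f(U)$ is a nonempty open subset of $X$. But $U\subseteq f^{-1}(\overline{N})$ gives $f(U)\subseteq \overline{N}$, contradicting the hypothesis that $\overline{N}$ has empty interior.

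Combining these observations, each $f^{-1}(N_n)$ is nowhere dense, so $X\setminus f^{-1}(E)$ is meager, which is precisely the assertion that $f^{-1}(E)$ is comeager. The argument uses only the properties that $f$ is continuous (to pull back closures) and that $f^{-1}$ is continuous, i.e.\ $f$ is open (to produce the contradiction on interiors); no use of the Baire category theorem itself is needed beyond the hypothesis that $X$ is a Baire space, which simply ensures that ``comeager'' is a nontrivial notion. The only step that requires a moment's care is recognizing that a bicontinuous bijection is automatically an open map, which is what lets us promote the nonempty open $U$ to a nonempty open $f(U)$ inside $\overline{N}$; this is the one place where both directions of the bicontinuity hypothesis are used, and is the main (minor) conceptual point of the proof.
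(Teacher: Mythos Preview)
Your proof is correct. The paper takes the dual route: rather than showing that $f^{-1}$ preserves nowhere dense sets, it simply observes that the forward image under a homeomorphism of any dense open set is again dense open, and uses the characterization of comeager sets as those containing a countable intersection of dense opens. Your argument and the paper's are complementary sides of the same fact (homeomorphisms preserve all the relevant topological notions); yours spells out the nowhere-dense side in full detail, while the paper compresses the dense-open side into a single sentence. Neither approach gains anything substantive over the other, and as you note, the Baire-space hypothesis on $X$ is not actually used in either proof.
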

\begin{proof}
  It suffices to observe that the forward image of
any dense open set under $f$ is a dense open set.
\end{proof}
\begin{Lemma}\label{L35}
   Let $E \subseteq \{0,1\}^{\NN}$ be comeager in$ \{0,1\}^{\NN}$. $\{x \in \{0,1\}^{\NN}:
[x]_\infty \subseteq E\}$ is comeager in $\{0,1\}^{\NN}$.
\end{Lemma}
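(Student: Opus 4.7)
My plan is to exploit the fact that each equivalence class $[x]_\infty$ is countable: it is the orbit of $x$ under the countable group $G$ of ``finite bit flips.''

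More precisely, for each finite subset $S \subseteq \NN$, let $\phi_S: \{0,1\}^\NN \to \{0,1\}^\NN$ be the map that flips the coordinates indexed by $S$, i.e. $\phi_S(x)_i = 1 - x_i$ for $i \in S$ and $\phi_S(x)_i = x_i$ otherwise. Each $\phi_S$ is a self-inverse homeomorphism of $\{0,1\}^\NN$, and $y \sim_\infty x$ holds if and only if $y = \phi_S(x)$ for some finite $S \subseteq \NN$. Therefore
\begin{equation*}
[x]_\infty = \{\phi_S(x) : S \subseteq \NN \text{ finite}\},
\end{equation*}
and, equivalently,
\begin{equation*}
\{x \in \{0,1\}^\NN : [x]_\infty \subseteq E\} \;=\; \bigcap_{S \subseteq \NN \text{ finite}} \phi_S^{-1}(E).
\end{equation*}

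Now I would invoke Lemma \ref{L1.2} on each $\phi_S$. The space $\{0,1\}^\NN$ is a compact Polish space and hence Baire, so Lemma \ref{L1.2} applies: since $\phi_S$ is a bicontinuous bijection and $E$ is comeager, $\phi_S^{-1}(E)$ is comeager for every finite $S$. Since the collection of finite subsets of $\NN$ is countable, the displayed intersection is a countable intersection of comeager sets in a Baire space, hence comeager. This concludes the proof.

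The only potential issue is foundational: ZFDC is required to manipulate comeager sets under countable intersection (the Baire category theorem for complete metric spaces is equivalent to DC over ZF, as the authors note in the footnote to the introduction). Beyond that, no obstacle arises — the argument is a direct application of the two preceding lemmas once one recognizes that $\sim_\infty$ has countable classes generated by an explicit countable family of homeomorphisms.
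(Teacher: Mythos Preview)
Your proof is correct and is essentially the same as the paper's: the paper parametrizes the finite bit-flip homeomorphisms by finite $\{0,1\}$-sequences $\alpha$ (setting $f_\alpha(x)=x+\alpha^*$ mod $2$, where $\alpha^*$ is $\alpha$ extended by zeros), which is exactly your family $\phi_S$ under the identification $S=\{i:\alpha_i=1\}$. Both arguments then conclude by writing $\{x:[x]_\infty\subseteq E\}=\bigcap \phi_S^{-1}(E)$ and applying Lemma~\ref{L1.2} together with closure of comeager sets under countable intersections.
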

\begin{proof}
  We apply Lemma \ref{L1.2} to the Baire space $ \{0,1\}^{\NN}$. For each
nonempty finite sequence $\alpha$ from $\{0,1\}$, let $\alpha^* \in \{0,1\}^{\NN}$ be $\alpha$
extended with all $0$'s, and $f_{\alpha}$ be the bicontinuous bijection
of $\{0,1\}^{\NN}$ given by $f_{\alpha}(x) = x + \alpha^*$. Here $+$ is addition modulo $2$. Obviously $\{x \in \{0,1\}^{\NN}:
[x]_\infty \subseteq E\} = \cap_{\alpha} f_{\alpha}^{-1}[E]$, which by Lemma \ref{L1.2}, is the countable
intersection of sets comeager in $\{0,1\}^{\NN}$.
\end{proof}
\begin{Lemma}\label{L36}
   Let $F:\{0,1\}^{\NN} \to \RR$ be Baire measurable. There
exists $x \in \{0,1\}^{\NN}$ and a finite initial segment $\alpha$ of $x$ such
that $(\forall y \in [x]_{\infty} \cap \{0,1\}^{\NN})(y \text{ \rm extends }\alpha \Rightarrow |F(x) - F(y)| < 1)$.
\end{Lemma}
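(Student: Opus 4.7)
The plan is to assemble the previous two lemmas together with continuity at a well-chosen point. First I would invoke Lemma \ref{L1.1} with $X = \{0,1\}^{\NN}$ (which is a compact metric space, hence Baire) and $Y = \RR$ (second countable). This produces a comeager set $E \subseteq \{0,1\}^{\NN}$ such that the restriction $F|_E$ is continuous in the subspace topology.

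Next I would apply Lemma \ref{L35} to $E$, obtaining that the set
\[
E^* = \{x \in \{0,1\}^{\NN} : [x]_\infty \subseteq E\}
\]
is comeager in $\{0,1\}^{\NN}$. In particular $E^*$ is nonempty by the Baire category theorem, so one can fix some $x \in E^*$.

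Now I would use continuity of $F|_E$ at this point $x$. Since $x \in E^* \subseteq E$ and $F|_E$ is continuous at $x$, there is a basic open neighborhood of $x$ in $\{0,1\}^{\NN}$ on which $F$ varies by less than $1$ along $E$. The basic neighborhoods of $x$ in the product topology are exactly the cylinder sets determined by finite initial segments of $x$, so there is a finite initial segment $\alpha$ of $x$ such that every $y \in E$ extending $\alpha$ satisfies $|F(x) - F(y)| < 1$.

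Finally, I would close the argument by observing that any $y \in [x]_\infty$ automatically lies in $E$ (because $x$ was chosen in $E^*$, so $[x]_\infty \subseteq E$). Hence if in addition $y$ extends $\alpha$, both hypotheses of the continuity bound are met, giving $|F(x) - F(y)| < 1$, which is the desired conclusion. There is no real obstacle here beyond correctly chaining Lemma \ref{L1.1} and Lemma \ref{L35}; the only subtle point to be careful about is noting that the continuity bound is a priori only along $E$, and that Lemma \ref{L35} is precisely what lets us restrict to a single equivalence class $[x]_\infty$ contained in $E$.
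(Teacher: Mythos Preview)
Your proposal is correct and follows essentially the same route as the paper: invoke Lemma~\ref{L1.1} to get a comeager $E$ on which $F$ is continuous, use Lemma~\ref{L35} to pick $x$ with $[x]_\infty\subseteq E$, and then read off a cylinder neighborhood $\alpha$ from continuity of $F|_E$ at $x$. The paper phrases the last step via the open set $(F|_E)^{-1}[(c-\tfrac12,c+\tfrac12)]$ with $c=F(x)$, but this is the same continuity argument you give.
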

\begin{proof}
  By Lemma \ref{L1.1}, $F$ is continuous over a comeager set $E \subseteq
\{0,1\}^{\NN}$. By Lemma \ref{L35}, fix $[x]_{\infty} \subseteq E$, and let $F(x) = c \in \RR$. $F^{-1}
[(c-\tfrac12,c+\tfrac12]$ is an open subset of $E$ (as a subspace of
$\{0,1\}^{\NN}$) that contains $x$. This open subset of $E$ must contain
all elements of $[x]_{\infty} \cap \{0,1\}^{\NN}$ that extend some particular
finite initial segment $\alpha$ of $x$.
\end{proof}
\begin{Definition}
  $\mathcal{S}^{*}:\{0,1\}^{\NN} \to \RR^\NN$ is defined by $\mathcal{S}^{*}(x) = \mathcal{S}(x) -
\Sigma(x)$, which must be an element of $\RR^\NN$ whose terms are all the
same. $\mathcal{S}^{**}(x)$ is the unique term of $\mathcal{S}^{*}(x)$.
\end{Definition}
\begin{Lemma}
   $\mathcal{S}^{*}$ and $\mathcal{S}^{**}$ are Baire measurable.
\end{Lemma}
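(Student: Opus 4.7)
The plan is to reduce both claims to the fact that Baire measurability is preserved under composition with continuous maps, and to observe that $\Sigma$ itself is continuous.

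First I would verify that $\Sigma:\{0,1\}^{\NN}\to\RR^\NN$ is continuous. For each $n\in\NN$, the $n$-th coordinate of $\Sigma(x)$ is $x_0+x_1+\cdots+x_n$, a function that depends on only finitely many coordinates of $x$ and is therefore continuous. Since $\RR^\NN$ carries the product topology, continuity in each coordinate yields continuity of $\Sigma$.

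Next I would exhibit $\mathcal{S}^{*}$ as a Borel (in fact Baire measurable) combination of $\mathcal{S}$ and $\Sigma$. The map $T:\RR^\NN\times\RR^\NN\to\RR^\NN$ given by coordinate-wise subtraction is continuous, and $\mathcal{S}^{*}(x)=T(\mathcal{S}(x),\Sigma(x))$. The pairing $x\mapsto(\mathcal{S}(x),\Sigma(x))$ is Baire measurable because its two components are (the first by hypothesis, the second by the previous paragraph), and postcomposition of a Baire measurable map with a continuous map remains Baire measurable. Hence $\mathcal{S}^{*}$ is Baire measurable.

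Finally I would handle $\mathcal{S}^{**}$ by recalling that, by the definition of $\mathcal{S}$ as a summation operator (Definition \ref{defsum}), $\mathcal{S}(x)-\Sigma(x)$ is a constant sequence for every $x$, so $\mathcal{S}^{**}(x)$ is well defined and equal to $\pi_0(\mathcal{S}^{*}(x))$, where $\pi_0:\RR^\NN\to\RR$ is the continuous projection onto the zeroth coordinate. Thus $\mathcal{S}^{**}=\pi_0\circ \mathcal{S}^{*}$ is Baire measurable as a composition of a continuous map with a Baire measurable one. There is no real obstacle here; the only thing to be slightly careful about is invoking the standard fact that continuous $\circ$ Baire measurable is Baire measurable (equivalently, preimages of open sets under a continuous map are open, hence their preimages under a Baire measurable map have the Baire property), which is immediate from the definition of Baire measurability via preimages of open sets.
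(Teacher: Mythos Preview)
Your proof is correct and follows essentially the same route as the paper: form the pairing $x\mapsto(\mathcal{S}(x),\Sigma(x))$, postcompose with the continuous subtraction map to get $\mathcal{S}^{*}$, and then postcompose with a coordinate projection to get $\mathcal{S}^{**}$. The paper spells out in slightly more detail why the pairing is Baire measurable (decomposing open sets in $\RR^\NN\times\RR^\NN$ into countable unions of rectangles), but this is exactly the standard fact you invoke.
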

\begin{proof}
  We first show that $\mathcal{S}^{*}$ is Baire measurable. Let
$J:\{0,1\}^{\NN} \to \RR^\NN \times \RR^\NN$ be given by $J(x) = (\mathcal{S}(x),\Sigma(x))$. Then $\mathcal{S}^{*}$
is
the composition of $J$ with subtraction; i.e., to evaluate
$\mathcal{S}^{*}(x)$, first apply $J$, and then apply subtraction. Let $V \subseteq \RR^\NN$
be open. Then $(\mathcal{S}^{*})^{-1}[V] = J^{-1}[W]$, where $W \subseteq \RR^\NN \times \RR^\NN$ is the
inverse image of subtraction on $V$. By the continuity of
subtraction, $W$ is open. Now $W$ is a countable union of
finite intersections of Cartesian products of open subsets
of $\RR^\NN$. Note that the inverse image of $J$ on the Cartesian
product of any two open subsets of $\RR^\NN$ is Baire measurable.
Hence the inverse image of $J$ on any open subset of $\RR^\NN \times \RR^\NN$
is Baire measurable, as required. To see that $\mathcal{S}^{**}$ is Baire
measurable, note that $\mathcal{S}^{**}$ is the composition of $\mathcal{S}^{*}$ with the
first projection function $\pi_1$; i.e., to evaluate $\mathcal{S}^{**}(x)$,
first apply $\mathcal{S}^{**}$ and then apply $\pi_1$. Use the continuity of $\pi_1$.
\end{proof}
\begin{Theorem}\label{THEOREM 7.}
The following is provable in ZFDC.   There is no Baire measurable summation operator
$\mathcal{S}:\{0,1\}^\NN \to \RR^\NN$ based at infinity.
\end{Theorem}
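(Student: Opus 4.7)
My plan is a standard descriptive-set-theory argument by contradiction, plugging directly into the machinery assembled in Lemmas \ref{L1.1}--\ref{L36}. I assume $\mathcal{S}:\{0,1\}^{\NN}\to \RR^{\NN}$ is a Baire measurable summation operator based at infinity and aim for a contradiction.

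First, I would record that $\mathcal{S}^{**}:\{0,1\}^{\NN}\to \RR$ is Baire measurable, which is exactly the lemma proved just above the theorem. I would then apply Lemma \ref{L36} to $\mathcal{S}^{**}$ to obtain some $x \in \{0,1\}^{\NN}$ and a finite initial segment $\alpha$ of $x$ such that
\[
|\mathcal{S}^{**}(x)-\mathcal{S}^{**}(y)|<1 \quad \text{for every } y\in[x]_{\infty} \text{ extending } \alpha .
\]

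Second, I would spell out the rigidity forced by the based-at-infinity hypothesis. For any $y\sim_{\infty} x$, the two real sequences $\mathcal{S}(x)=\Sigma(x)+\mathcal{S}^{**}(x)$ and $\mathcal{S}(y)=\Sigma(y)+\mathcal{S}^{**}(y)$ (in the componentwise sense of Definition \ref{defsum}) must eventually agree termwise. Therefore, for all sufficiently large $n$,
\[
\mathcal{S}^{**}(y)-\mathcal{S}^{**}(x) \;=\; \Sigma(x)_n - \Sigma(y)_n \;=\; \sum_{i\in I}(x_i-y_i),
\]
where $I$ is the finite set of coordinates on which $x$ and $y$ disagree. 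In particular, this difference is a (nonzero) integer whenever $I\neq\emptyset$.

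Third, to contradict Lemma \ref{L36}, I would pick any single index $i^{*}\ge |\alpha|$ and let $y\in\{0,1\}^{\NN}$ be obtained from $x$ by flipping the bit at $i^{*}$. Then $y\in[x]_{\infty}$ and $y$ extends $\alpha$, so the bound from Lemma \ref{L36} applies; but the cocycle computation of the second step gives $|\mathcal{S}^{**}(x)-\mathcal{S}^{**}(y)|=|x_{i^{*}}-y_{i^{*}}|=1$, violating the strict inequality.

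I do not anticipate a serious obstacle. The heart of the argument is the observation in the second step: being based at infinity forces $\mathcal{S}^{**}$ to jump by a nonzero integer under any single-coordinate flip, while Baire measurability (via Lemmas \ref{L1.1} and \ref{L35}) forces $\mathcal{S}^{**}$ to be locally almost-constant on a comeager equivalence class $[x]_{\infty}$. These two constraints are incompatible. The entire argument stays inside ZFDC since Lemmas \ref{L1.1}--\ref{L36} and the Baire measurability of $\mathcal{S}^{**}$ are all proved in ZFDC.
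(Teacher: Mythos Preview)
Your proof is correct and essentially identical to the paper's: both invoke the Baire measurability of $\mathcal{S}^{**}$, apply Lemma \ref{L36} to obtain $x$ and $\alpha$, and then flip a single bit past $\alpha$ so that the based-at-infinity condition forces $|\mathcal{S}^{**}(x)-\mathcal{S}^{**}(y)|=1$, contradicting the strict bound $<1$. One small inaccuracy: your parenthetical claim in the second step that the difference $\sum_{i\in I}(x_i-y_i)$ is \emph{nonzero} whenever $I\neq\emptyset$ is false in general (two flips of opposite sign cancel), but since you only use the single-flip case in the third step, the argument goes through.
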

\begin{proof}
   We have only to complete the promised contradiction.
Since $\mathcal{S}^{**}$ is Baire measurable, by Lemma \ref{L36}, fix $x \in \{0,1\}^{\NN}$
and a finite initial segment $\alpha$ of $x$ such that $(\forall y \in [x]_{\infty} \cap
\{0,1\}^{\NN})(y\text{ extends }\alpha \Rightarrow |\mathcal{S}^{**}(x) - \mathcal{S}^{**}(y)| < 1)$. Let $y\in [x]_{\infty}
\cap \{0,1\}^{\NN}$ extend $\alpha$ and agree everywhere with $x$ except at
exactly one argument (arguments are elements of $\NN$).
Obviously $|\Sigma(x) - \Sigma(y)|$ is eventually $1$ or eventually $-1$.
Since $x \sim_{\infty} y$, $\mathcal{S}(x) \sim_{\infty} \mathcal{S}(y)$, and so $\mathcal{S}(x)$ and $\mathcal{S}(y)$ eventually
agree. Now $\mathcal{S}^{*}(x) = \mathcal{S}(x) - \Sigma(x)$ and $\mathcal{S}^{*}(y) = \mathcal{S}(y) - \Sigma(y)$.
Hence $\mathcal{S}^{*}(x) - \mathcal{S}^{*}(y) = \mathcal{S}(x) - \mathcal{S}(y) + \Sigma(y) - \Sigma(x)$. Using the
previous paragraph,$ \mathcal{S}^{*}(x) - \mathcal{S}^{*}(y)$ is eventually of
magnitude $< 1$, $\mathcal{S}(x) - \mathcal{S}(y)$ is eventually $0$, and $\Sigma(y) - \Sigma(x)$
is eventually $-1$ or eventually $ 1$. We have reached the
required contradiction.
 
\end{proof}

\section{Proofs of the main results}\label{proofmainresults}
\subsection{Theorem \ref{T1}, \eqref{T1a}}\label{existchoice} ZFC proves that $EI$ has  $2^{\frak{c}}$ elements.
\begin{proof}
\begin{enumerate}

\item  Let $\tilde{L}$ be the set of the equivalence classes induced by \eqref{equivrel}.
  Consider the vector space $\tilde{V}$ generated by $\tilde{L}$. Let $\tilde{V}_1$ be the equivalence classes induced by \eqref{equivrel} of the $L^1$ functions in $C((0,1])$ and let $\tilde{B}_1$  be a Hamel  basis in $\tilde{V}_1$. By the usual construction using Zorn's Lemma let $\tilde{B}$ be a basis for $\tilde{L}$ containing $\tilde{B}_1$.

\item $0$ will represent the equivalence class of $0$. Note  that any representative of  $\tilde{V_1}$ is in $L^1$. 

\item The elements $b$ are linearly independent of each other. Indeed $\sum_{i\le N} c_i b_i=0$ implies
 $\sum_{i\le N} c_i \tilde{b}_i\sim_0 0$, a contradiction. 

\item Let $V$ be the vector space generated by the $b$'s and $V_1$ be the vector space generated by $b_{1}$'s (the representatives of $\tilde{b}_1\in \tilde{V}_1$). 

\item On $V_1$ we let $\Lambda$ be the linear functional $v_1\to \int_0^1 v_1$. We write $V=V_1\oplus V_2$; any $v$ can be uniquely written as $v=v_1+v_2, v_{1,2}\in V_{1,2}$. We let $\Lambda v=\Lambda v_1$. This is obviously a linear functional on $V$.

\item Let $f\in C((0,1])$. By assumption $f\in \tilde{f}\in \tilde{L}$ for some $\tilde{f}$, and $\tilde{f}$ can be written uniquely in the form $\tilde{f}=\sum_{i=1}^N c_i \tilde{b}_i$, which is equivalent to $f=\sum_{i=1}^N c_i {b}_i+h$, $h\sim_0 0$. The decomposition is unique since  $$\sum_{i=1}^N c_i {b}_i+h=0\Leftrightarrow \sum_{i=1}^N c_i \tilde{b}_i\sim_0 0\Leftrightarrow c_i=0\,\forall i\le N$$

\item Now we simply define
  \begin{equation}\label{Defint}
   \mathcal{P}_0 f= \int_1^x \sum_{i=1}^N c_i {b}_i+\Lambda\sum_{i=1}^N c_i {b}_i+\int_0^x h
  \end{equation}
where the last integral is the Lebesgue integral, which  exists since
$h\sim_0 0$.

\end{enumerate}
It is now straightforward to check that $\mathcal{P}_0$ is a  linear antiderivative  with the required properties. Eventual positivity comes from the fact that $\mathcal{P}_0$  coincides with $\int_0^x$ in $L^1$ and the fact that $\int_1^x f\to -\infty$ otherwise. The property $\mathcal{P}_0g'=g+const.$ follows from the fact that, for $x\in (0,1]$ we have  $(\mathcal{P}_0g')(x)=(\mathcal{P}_0g')(1)+\int_1^x g'$. 

We note that we have not used continuity in this proof, and the extension claimed after the statement of Theorem \ref{T1} is obvious.
\end{proof}

\subsection{Proof of Theorem \ref{T1} \ref{T1b}} \label{S72}  Existence results in $Op(f^{\Join}),f\in L^1$ are  immediate.
  ZFDC proves the following. $ Op(f^{\Join})$ has a  Borel measurable element
if and only if  $f\in L^1$. 
\begin{proof} This is immediate from Proposition \ref{PZFDC} and Theorem \ref{THEOREM 7.}.
\end{proof}
Note that the equivalence in Theorem \ref{T1} \eqref{T1b} does not
involve provability or definability notions. Arguably, any subset of or function between Polish spaces that is not Borel measurable, is mathematically pathological or at least mathematically undesirable. The Borel measurable sets form a natural
hierarchy of length the first uncountable ordinal, and it
can be further argued that any subset of or function that
does not lie in the first few levels of this hierarchy is
pathological or at least mathematically undesirable.
Borel measurability in Polish spaces is extensively
investigated in descriptive set theory, particularly in
connection with Borel equivalence relations and reductions
between them. See \cite{Hjorth}.
\subsection{Theorem \ref{T1} \ref{T1c}} 
ZFDC proves  that, if $f\notin L^1$ and  $Op(f^{\Join})\ne \emptyset$, then there
is a set of reals which is not Baire measurable.

\begin{proof}
  Assume that $ Op(f^{\Join}),f\notin L^1 $ is nonempty. By Proposition \ref{PZF}, let $\mathcal{S}\in Su$. By Theorem \ref{THEOREM 7.}, $\mathcal{S}$ is not Baire
measurable. Hence there is a subset of $\{0,1\}^{\NN}$ that is not
Baire measurable in $\{0,1\}^{\NN}$. Let $T \subseteq \{0,1\}^{\NN}$ consist of
removing the elements of $\{0,1\}^{\NN}$ that are eventually
constant. Then $T$ is homeomorphic to $\RR\setminus\QQ \subseteq \RR$. Also since we
have removed only countably many points from $\{0,1\}^{\NN}$, there
is a subset of $T$ that is not Baire measurable in $T$. Hence
there is a subset of $\RR\setminus\QQ$ that is not Baire measurable in
$\RR\setminus\QQ$. Hence there is a subset of $\RR$ that is not Baire
measurable in $\RR$. 
\end{proof}
\begin{Lemma}\label{738}
  ZFDC does not prove the existence of a set of
reals that is not Baire measurable.
\end{Lemma}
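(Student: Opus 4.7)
The plan is to appeal to the classical consistency results which show that ZFDC is compatible with every set of reals having the Baire property. Specifically, the theorem of Solovay (1970) states that from the consistency of ZFC together with an inaccessible cardinal, one obtains a model of ZF + DC in which every set of reals has the Baire property (and is Lebesgue measurable, and has the perfect set property). Shelah (1984) subsequently showed that, for the Baire property alone, no large cardinal hypothesis is needed: the bare consistency of ZFC already suffices to produce a model of ZF + DC in which every set of reals has the Baire property.

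First I would emphasize that the content of Lemma \ref{738} is a meta-mathematical statement about the theory ZFDC, not an assertion to be proved inside it. Next I would invoke either the Solovay or the Shelah construction to obtain a model $M$ of ZFDC in which the sentence ``every set of reals has the Baire property'' holds. By the soundness theorem, if ZFDC proved the existence of a non-Baire-measurable set of reals, that existence statement would hold in $M$, contradicting the chosen property of $M$. Hence ZFDC cannot prove the existence of any such set, which is exactly the assertion of the lemma.

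The main obstacle is not in this short soundness argument but in the underlying model construction that is being cited. Solovay's construction uses the Levy collapse of an inaccessible cardinal together with a definability analysis of sets of reals in the resulting symmetric submodel; Shelah's improvement eliminates the inaccessible by a more delicate forcing and amalgamation argument, and is technically substantial. For the present paper, however, it is enough to quote the end result; if one wishes to avoid even the appeal to Shelah's theorem, one can settle for the weaker but still adequate conclusion that ``ZFDC + there exists an inaccessible cardinal does not prove the existence of a non-Baire-measurable set of reals,'' which follows directly from Solovay's original theorem and is equally sufficient to make the point that Theorem \ref{T1}\eqref{T1c} genuinely produces a set of reals whose existence cannot be established within ZFDC.
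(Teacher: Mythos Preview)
Your proposal is correct and matches the paper's own proof, which likewise cites Solovay's 1970 result (under the consistency of ZFC with an inaccessible) and Shelah's 1984 improvement removing the large-cardinal hypothesis. Your additional remarks about soundness and the meta-mathematical nature of the statement are accurate elaborations of the same argument.
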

\begin{proof}
 This is proved in \cite{SolovayA}  assuming that ZFC + ``there exists a strongly inaccessible cardinal'' is consistent. It was subsequently proved in \cite{[Shxx]} assuming only that ZFC is consistent.
\end{proof}
\subsection{Theorem \ref{T1} \ref{T1d}}
 The statement $(\exists f\notin L^1)( Op(f^{\Join})\ne \emptyset)$ is not provable in ZFDC.
 \begin{proof}
   Suppose ZFDC proves $ Op(f^{\Join})  \ne\emptyset$ for an $f\notin L^1$. By Theorem \ref{T1} \eqref{T1c}, ZFDC
proves that there exists a set of reals that is not Baire
measurable. This contradicts Lemma \ref{738}.
 \end{proof} Theorem \ref{T14} follows from Theorem \ref{T1} \ref{T1a} and \ref{T1d}.
\subsection{Proof of Theorems \ref{New10} and \ref{New11}}\label{S73}
The most convincing negative results of this paper are
Theorems \ref{New10},  \ref{New11}, \ref{T2} and \ref{T2.5}. These involve
explicit definability. In many contexts in descriptive set
theory, we have non Borel measurability, yet we do have
demonstrably explicit definability. The most direct example
of this is by constructing an $A \subseteq \RR^2$ such that every Borel
measurable $B \subseteq \RR$ is of the form $\{y \in \RR: (c,y) \in A\}, c \in \RR$.
Then we can form the diagonal set $\{y \in \RR: (y,y) \notin A\}$, which
obviously differs from every Borel measurable $B \subseteq \RR$.
A more mathematically interesting example is as follows.
Consider the infinite product space $\QQ^\NN$, using the order
topology on $\QQ$. Then $\{x \in \QQ^\NN: \text{rng}(x)$ is a compact subset of
$\QQ\}$ is well known to be not Borel measurable.

Theorem \ref{New10} follows immediately from Theorem \ref{New11}.
\begin{Lemma}\label{Solovay}
Let $T$ be ZFC or ZFC extended by any standard
large cardinal hypothesis, such as on the Chart of
Cardinals in \cite{[Ka94]}.  Let $M$ be a countable model of $T$. There
is a countable mild forcing extension $M'$ of $M$ satisfying $T$
+ ``every set of reals is Baire measurable'' in which every
$M'$ definable set of reals of $M'$, with reals of $M'$ as
parameters, is internally Baire measurable.
\begin{proof}
  This result was proved in \cite{SolovayA} with ZFC replaced by
ZFC + ``there exists a strongly inaccessible cardinal''. This
result as stated was proved in the subsequent \cite{[Shxx]}.
\end{proof}
\end{Lemma}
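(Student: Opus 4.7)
The plan is to invoke, essentially unchanged, the classical forcing constructions of Solovay \cite{SolovayA} and Shelah \cite{[Shxx]}, with a routine adaptation to preserve the large cardinals of $T$. The statement divides naturally into two tasks: (a) produce a Solovay-style countable forcing extension $M'$ of $M$ in which every set of reals definable with real parameters has the Baire property, and (b) verify that the large cardinal content of $T$ survives the extension. The quoted clause ``every set of reals is Baire measurable'' is to be read as referring to the inner model $L(\RR)^{M'}$, which is where the sweeping measurability statement holds; $M'$ itself still satisfies $T$ (and hence AC) externally.

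First I would take a countable transitive $M \models T$, augmented, if convenient, by a strongly inaccessible $\kappa$ sitting above all critical points involved in the large cardinal axioms of $T$. Form the Levy collapse $\mathbb{P} = \mathrm{Coll}(\omega,<\kappa)^M$ and let $G$ be $\mathbb{P}$-generic over $M$; set $M' = M[G]$. Because $\mathbb{P}$ is weakly homogeneous and of size $\kappa$, every real of $M'$ lies in some intermediate extension $M[G \cap \mathrm{Coll}(\omega,<\alpha)]$ with $\alpha<\kappa$. The core of Solovay's argument then represents any set of reals definable in $M'$ from parameters in such an intermediate extension as a countable Boolean combination of sets of the form $\{x : p \Vdash \varphi(x,\vec r\,)\}$, each of which is open modulo meager in the canonical topology on the collapse; this yields internal Baire measurability of every such set in $M'$, and the standard computation shows $L(\RR)^{M'} \models \mathrm{ZF}+\mathrm{DC}+$ ``every set of reals has the Baire property,'' which is the intended reading of the lemma's middle clause.

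The main obstacle is the cardinal-preservation bookkeeping: for each standard large cardinal hypothesis on the chart of \cite{[Ka94]}, one must verify that the relevant embeddings lift through the collapse, which is routine (the collapse is small relative to measurables, strongs, supercompacts, etc., chosen sufficiently above $\kappa$) but does need to be checked case by case. The only delicate point is when $T$ is ZFC itself, with no inaccessible available; here Solovay's original approach fails, and one must invoke Shelah's \cite{[Shxx]} improvement, which replaces the Levy collapse by an intricate tree forcing plus absoluteness argument and recovers exactly the Baire-property conclusion (though not Lebesgue measurability) from the mere consistency of $T$. Since both cited results are established literature, the proof reduces to quoting them in the parametrized form required.
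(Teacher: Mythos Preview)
Your proposal is correct and takes the same approach as the paper: the paper's own proof is nothing more than a citation of Solovay \cite{SolovayA} (for the inaccessible-cardinal version) and Shelah \cite{[Shxx]} (for the elimination of the inaccessible in the Baire-property case), exactly the two sources you invoke. You simply supply more detail than the paper does---sketching the Levy collapse, the homogeneity argument, and the large-cardinal preservation---whereas the paper treats the lemma as a black-box reference.
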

 Here is the formal statement of Theorem \ref{New11}.
\newtheorem*{Theorem*}{Theorem \ref{New11} (formal)}
\begin{Theorem*}
  There is no formula $\varphi$  of ZFC with
exactly the free variables $x,y$, such that the following is
provable in ZFC. 
\begin{enumerate}[i]
\item \label{iti}$\varphi(x,y)\Rightarrow  x\in\RR\wedge (\exists! y)(\varphi(x,y))$. 
\item \label{itii} $(\exists x, y)(\varphi(x,y)\wedge (\exists f\notin L^1 \wedge y\in Op(f^{\Join})))$.
\end{enumerate}
This also holds for ZFC extended by any of the usual large
cardinal hypotheses, provided the extension results in a
consistent system.
\end{Theorem*}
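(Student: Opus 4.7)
The plan is to pass to a Solovay--Shelah forcing extension in which every definable set of reals is internally Baire measurable, then collapse any hypothetical $\varphi$ to a definable summation operator and invoke Theorem~\ref{THEOREM 7.}. Assume for contradiction that such a $\varphi$ exists; write $T$ for the underlying theory (ZFC or ZFC extended by the large cardinal hypothesis under consideration), so that clauses (i) and (ii) are theorems of $T$. I would fix a countable model $M\models T$ and let $M'$ be the countable mild forcing extension furnished by Lemma~\ref{Solovay}. Then $M'\models T$ (so in particular $M'\models$ ZFDC), every set of reals of $M'$ that is $M'$-definable with real parameters is internally Baire measurable, and both (i), (ii) hold in $M'$.

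Arguing inside $M'$, I would fix witnesses $x_0\in\RR$, the unique $y_0$ with $\varphi(x_0,y_0)$, and some $f_0\in C((0,1])\setminus L^1$ with $y_0\in Op(f_0^{\Join})$, and code $f_0$ as a real in the standard way (e.g.\ by its values on an enumeration of the rationals in $(0,1]$). Next I would run the explicit ZF construction of Proposition~\ref{providesum} (using Propositions~\ref{L10} and~\ref{Cantor2}): build $\tau$, the decreasing sequence $(\alpha_k)$, the cutoffs $h_a$ indexed by $a\in\{0,1\}^\NN$, and assemble the summation operator $\mathcal{S}:\{0,1\}^\NN\to\RR^\NN$ based at infinity. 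Crucially, $y_0$ enters only through the countably many evaluations $y_0(\tau h_a)(\alpha_k)$, and by clause (i) each such value is uniquely characterized in $M'$ by $\varphi$ together with the parameter $x_0$. Consequently the graph of $\mathcal{S}$ is a subset of $\{0,1\}^\NN\times\RR^\NN$ that is first-order definable in $M'$ from the two real parameters $x_0$ and $f_0$.

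For each basic open $V\subseteq\RR^\NN$ the preimage $\mathcal{S}^{-1}(V)\subseteq\{0,1\}^\NN$ is then a set of reals of $M'$ definable with real parameters, hence internally Baire measurable by Lemma~\ref{Solovay}. Thus $\mathcal{S}$ is a Baire measurable summation operator based at infinity in $M'$, contradicting Theorem~\ref{THEOREM 7.}, which is available in $M'$ since $M'\models$ ZFDC. Theorem~\ref{New10} is then the special case in which $\varphi$ has no free variable $x$, recovered by regarding the alleged definition as the unique $y$ satisfying a parameter-free predicate.

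The hard part will be the definability bookkeeping in the middle paragraph: although $y_0$ is a higher-type object rather than a real parameter, one must verify that the countably many operator evaluations $y_0(\tau h_a)(\alpha_k)$ feeding into the construction of $\mathcal{S}$ can all be absorbed into a single first-order formula with only the real parameters $x_0$ and $f_0$. This is exactly what the uniqueness clause (i) of the theorem supplies---it turns ``$y_0$ evaluated at this input'' into a $\varphi$-expressible condition in $x_0$ alone---so once this is carefully written out, the Solovay--Shelah technology and Theorem~\ref{THEOREM 7.} do the rest of the work.
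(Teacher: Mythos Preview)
Your proposal is correct and follows essentially the same route as the paper: pass to the Solovay--Shelah extension $M'$ of Lemma~\ref{Solovay}, use clause (i) to make $y_0$ definable from the real parameter $x_0$, feed $y_0$ through the explicit construction of Proposition~\ref{providesum} to obtain an $M'$-definable element of $Su$, and contradict Theorem~\ref{THEOREM 7.}. The only cosmetic differences are that you carry $f_0$ explicitly as a second real parameter (the paper absorbs this into ``definable from $y$'') and that you close the argument by showing $\mathcal{S}$ is Baire measurable rather than first extracting a non-Baire subset of $\RR$; both are equivalent ways of invoking Lemma~\ref{Solovay}.
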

\begin{proof}
Let $T$ be as in Lemma \ref{Solovay}. Let $\varphi$ be such that i,ii are
provable in $T$. Let $M,M'$ be as given by Lemma \ref{Solovay}.
By ii, choose $x,y \in M'$ such that  $\varphi(x,y) \wedge y \in  Op(y^{\Join}) \wedge
y \notin L^1$ holds in $M'$. By i, $ x \in  \mathbb{R}$ holds in $M'$, and $y$ is $M'$
definable from $x$. By Proposition \ref{providesum}, let $S \in  Su$, where $S$ is
$M'$ definable from $y$, and hence $M'$ definable from $x$. By
Theorem \ref{THEOREM 7.}, $S$ is
not Baire measurable in $M'$. By the explicit construction in
the proof
of Theorem \ref{T1} (c) that converts a non Baire measurable set
in $\{0,1\}^\NN$ to a non
Baire measurable set in $\mathbb{R}$, we obtain a set of reals,
internal to $M$, which is non
Baire measurable in the sense of $M'$, and also $M'$ definable
from a real internal to
$M'$. This contradicts Lemma \ref{Solovay}.
\end{proof}
\subsection{Proof of Theorem \ref{T2.5}}\label{S6.6-}
Once again, Theorem \ref{T2} follows immediately from Theorem \ref{T2.5}. Theorem \ref{T2.5} is formalized analogously to Theorem \ref{New11}.  We omit
the detailed formalization.
We now prove Theorem \ref{T2.5}, which we repeat here for the
convenience of the reader.

{\z \bf Theorem \ref{T2.5}}.
 Let $E\subset \ZZ^4$ be arithmetically presented, where ZFC proves that $E$ codes some  $f\in C((0,1])$. There is a definition which, provably in ZFC,
uniquely defines a function whose domain is a set of real numbers, with a value in  $Op(f^{\Join})$  if and only if ZFC proves $f\in L^1$. This also holds for
ZFC extended by the usual large cardinal hypotheses.
\begin{proof}
 Proof: Let $E$ be as given. 
Let $T$ be as in Lemma \ref{Solovay}. Let $T$ prove that $E$ codes $f \in
C((0,1])$. Suppose $T$ does not prove $f \in L^1$. Let $M$ be a
countable model of $T + f \notin L^1$. Let $M'$ be as given by Lemma
\ref{Solovay}. Then $M'$ also satisfies $f \notin L^1$. This is because $f \notin L^1$
is an arithmetic sentence.
Now suppose that $\varphi$ is a definition which, provably in $T$,
uniquely defines a function whose domain is a set of real
numbers, with a value in $Op(f^{\Join})$. Then in $M'$, we obtain an
element of $Op(f^{\Join})$ that is definable from an internal real
in $M'$.  Following the proof of Theorem \ref{New11}, we obtain a contradiction.
\end{proof}
To prove the weaker Theorems \ref{New10} and \ref{T2}, where there are no
real number parameters, it suffices to use Lemma \ref{Solovay}
with  $M$ definability without parameters. This is because the three spaces in question are explicitly defined. For
this weakened form of Lemma \ref{Solovay}, we can adhere to
\cite{SolovayA}, merely generically collapsing $\omega_1$ to $\omega$, and
weaken the assumption of the consistency of ZFC +
``there exists a strongly inaccessible cardinal'' to
the consistency of ZFC.

\section{Appendix: the Hadamard $p.f.$}\label{Appendix} In a nutshell, what is now called the Hadamard ``partie finie'' ($p.f.$,  finite part) relies on smoothness assumptions on $f$ to integrate by parts in \eqref{eq:defH2}. The infinite endpoint values are discarded at each step. This process lowers the order of the singularity of the integrand until it becomes $L^1$.  

As shown  by  M. Riesz, cf. \cite{Riesz}, a natural way to interpret $p.f.$  is through analytic continuation with respect to $\alpha$  of the right side of  \eqref{eq:defH2}, starting from a power of $s$ for which the integrand is in $L^1$. Analytic continuation from $\Re \alpha>0$ to $\Re \alpha>-n$ exists if \eqref{eq:sufc2} holds. This is manifestly so: indeed after integration by parts we obtain
\begin{equation}
  \label{eq:Riesz}
 \Gamma(\alpha)(J^\alpha f)(x)=\sum_{k=0}^{n-1}\frac{\Gamma(\alpha)f^{(k)}(x)}{\Gamma(\alpha+k+1)}x^{\alpha+k}+\int_0^xs^{\alpha+n-1}f^{(n)}(s)ds
\end{equation}

\bigskip 

For the distributional interpretation, see {\em e.g.} \cite{Hormander} \S 3.2.

This paper establishes that the above mentioned sufficient condition is also necessary in a deep sense:  there is no formula further extending \eqref{eq:Riesz} without smoothness assumption. In this sense, the Hadamard definition is optimal: it is necessary that $f$ have exactly the  regularity required by \eqref{eq:sufc2}, which is the same as the  regularity needed for the right side of \eqref{eq:Riesz} to make sense.
\subsection{The analytic case}\label{Ap}
Our main results can be adapted to study the $p.f.$ on meromorphic functions (where it exists) and on functions with essential singularities. 
The functions $\mathcal{A}$ we study are analytic in $D=\DD\setminus \{0\}$ and continuous up to the boundary. 
For analytic functions, the space $f^{\Join}$ cannot be adapted by simply replacing bounded $C^\infty$ functions with bounded analytic functions on $\DD\setminus \{0\}$, because zero would be a removable singularity, and any regularity at zero of $f$ would be inherited by $f^{\Join}$. Instead, we enlarge this space. We take a weight $0<w\in C((0,1]$, which we assume for simplicity to be w.t. $x^2 w(x)$ is decreasing, and define
$$w^{\Join}:=\{f\in\mathcal{A}:|f(z)|< w(|z|)\}=\mathcal{A}\cap L^\infty_w(D)$$
 We require (i)  $(\mathcal{P}_0(f))'=f$. However, in $\mathcal{A}$,  $\sim_0$ is trivial:  $f\sim_0 g\Rightarrow f\equiv g$. We impose on $\mathcal{P}_0$   a stronger requirement, one that is satisfied by $p.f.$   It is convenient to make the change of variables $t=1/x$, $x^2w(x)=W(t)$ and move the problem to infinity. We are then looking for an integral based at infinity. Note  that $W$ is increasing. We write $W(x)=x^{2g(x)-1}$; $g$ is also increasing. Evidently, if $g$ increases without bound, then $W$ is superpolynomial. Let $G=g^{-1}$ and assume without loss of generality that $g$ grows slowly enough that 
 \begin{equation}
   \label{eq:defbetak}
 \forall k\in \ZZ^+,\ \    \beta_k:=G(k)>5^k
 \end{equation}
Our condition (ii') is ``if the antiderivatives coincide on intervals, then they coincide'', more precisely
\begin{equation}
   \label{eq:conda} (ii')\ f_1\sim_{\infty} f_2 
\text{ iff } (\forall \,\,n\in  \ZZ^+  \(\int_{\beta_{n+1}}^{\beta_n} f_1= \int_{\beta_{n+1}}^{\beta_n} f_2\)\text{ then } \Bigg((\mathcal{P}_0 f_1)(\beta_1)=(\mathcal{P}_0 f_2)(\beta_1)\Bigg)
 \end{equation}
 \begin{Note}{\rm 
 Of course, if $f_1,f_2$ grow too slowly,    \eqref{eq:conda} may imply $f\equiv g$; this is not a problem, as there will always be suitable sequences  which do not entail $f\equiv g$. A serious problem however is that the equivalence relation depends on a  non canonical $(\beta_k)_{k\in \ZZ^+}$. We leave open the question of the existence of more natural and nontrivial equivalence relations on spaces with unique continuation.
} \end{Note}
For analytic functions with an isolated singularity at zero, the classical domain of $p.f.$ is optimal:
\begin{Theorem}
(A)  ZFC proves the existence of a  $\mathcal{P}_0$ on $D$ with the properties (i), (ii') and also: (iii) $\mathcal{P}_0$ is linear, and (iv) $\mathcal{P}_0 =p.f.$ on meromorphic functions. 

(B) ZFDC  proves the existence of a $\mathcal{P}_0$ on $w^{\Join}$ satisfying (i) and (ii') above, {\bf iff} $x^nw(x)\to 0$ as $x\to 0$ for some $n$. 
\end{Theorem}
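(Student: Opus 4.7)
Both parts adapt techniques from the real case to the analytic setting.

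For part (A), my plan is to mirror the Hamel-basis construction of Theorem \ref{T1}(a). On the subspace $\mathcal{M}\subseteq\mathcal{A}$ of meromorphic functions, Riesz's $p.f.$ \eqref{eq:Riesz} is a canonical linear antiderivative satisfying (i)--(iv). Passing to the quotient $\mathcal{A}/{\sim}$ under the equivalence $f_1\sim f_2\iff \int_{\beta_{n+1}}^{\beta_n}(f_1-f_2)=0$ for every $n\in\ZZ^+$, I would invoke Zorn's Lemma to extend a Hamel basis of $\mathcal{M}/{\sim}$ to one for $\mathcal{A}/{\sim}$, and define $\mathcal{P}_0$ to equal $p.f.$ on the $\mathcal{M}$-basis elements and to be any chosen antiderivative (determined up to an additive constant since $(\mathcal{P}_0 f)'=f$) on each remaining basis element, extending linearly. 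Properties (i), (iii), (iv) are immediate, and (ii') holds because $\mathcal{P}_0$ factors through the quotient.

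For the $(\Leftarrow)$ direction of part (B), if $x^n w(x)\to 0$ then $|z^n f(z)|\to 0$ for every $f\in w^{\Join}$, so by Riemann's removable singularity theorem $z^n f$ extends analytically through $0$ and $f$ is meromorphic with pole of order at most $n$. The Riesz formula \eqref{eq:Riesz} then gives an explicit linear ZFDC-definable antiderivative satisfying (i). For (ii'), linearity reduces the claim to: a meromorphic $g$ with $\int_{\beta_{k+1}}^{\beta_k}g=0$ for all $k$ is identically zero. Setting $G=p.f.\int_0^z g$, the hypothesis makes $G(\beta_k)$ constant in $k$; a nonzero residue of $g$ at $0$ would force $G(\beta_k)\to\pm\infty$, so $G$ is meromorphic on $\DD$, and the identity theorem applied to $z^{k-1}(G-G(\beta_1))$, which is analytic on $\DD$ with zeros accumulating at $0$, yields $G$ constant and $g\equiv 0$.

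For the $(\Rightarrow)$ direction I argue the contrapositive in the spirit of Theorem \ref{T1}(d): assuming $w$ is super-polynomial and that a ZFDC-definable $\mathcal{P}_0$ exists on $w^{\Join}$, the plan is to construct in ZF a Borel family $\{f_a\}_{a\in\{0,1\}^{\NN}}\subseteq w^{\Join}$ with $\int_{\beta_{n+1}}^{\beta_n}f_a=a_n$, extract from $\mathcal{P}_0$ a Baire measurable summation operator based at infinity, and contradict Theorem \ref{THEOREM 7.} together with Lemma \ref{738}. The main obstacle is building $\{f_a\}$: since analytic functions cannot be compactly supported, the $C^\infty$ bumps of \S\ref{S43} are unavailable, so I would build an approximate biorthogonal system of analytic functions (e.g.\ $\phi_n(z)=c_n z^{-k_n}$ with rapidly growing $k_n$) and correct it to an exact biorthogonal system by inverting a near-identity operator; the rapid spacing $\beta_k>5^k$ and super-polynomial growth of $w$ leave ample room for $f_a=\sum_n a_n\phi_n$ to converge in $w^{\Join}$ uniformly on compact subsets of $D$. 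A subtler difficulty is that ``based at $0^+$'' of the real case is replaced by the weaker (ii'); since unique continuation precludes local agreement of $f_a$ and $f_{a'}$ when $a\sim_\infty a'$, verifying basedness at infinity of the induced $\mathcal{S}$ will require invoking (ii') on carefully chosen auxiliary functions whose full collection of interval-integrals agree.
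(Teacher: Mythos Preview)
Your treatment of (A) and of the $(\Leftarrow)$ direction of (B) matches the paper: the Hamel-basis extension of $p.f.$ from meromorphic functions to $\mathcal{A}$, passing to the quotient by $\sim_\infty$, is precisely the adaptation the paper makes of \S\ref{existchoice}; for $(\Leftarrow)$ the paper likewise just notes that polynomial $w$ forces $w^{\Join}$ into the meromorphic class where $p.f.$ applies.

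The gap is in your construction for $(\Rightarrow)$. The candidates $\phi_n(z)=c_n z^{-k_n}$ are \emph{not} approximately biorthogonal for the functionals $f\mapsto\int_{\alpha_{m+1}}^{\alpha_m}f$: normalizing the diagonal entry to $1$ forces $c_n\approx (k_n-1)\alpha_{n+1}^{k_n-1}$, and then for $m>n$ one finds
\[
\int_{\alpha_{m+1}}^{\alpha_m}\phi_n \ \approx\ \Bigl(\frac{\alpha_{n+1}}{\alpha_{m+1}}\Bigr)^{k_n-1},
\]
which is large, not small, since $\alpha_{m+1}<\alpha_{n+1}$. The resulting Gram matrix is far from the identity and your perturbative correction step cannot be carried out. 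More structurally, an exact biorthogonal $\phi_n$ would require an antiderivative interpolating the step sequence $m\mapsto\mathbf{1}_{m>n}$ at the $\beta_m$, and the jump makes uniform growth control awkward.

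The paper's device sidesteps this entirely: instead of targeting the increments $a_k$, it interpolates the \emph{partial sums} $s_k=\sum_{j\le k}a_j$ at the nodes $\beta_k$ by the Lagrange-type entire function
\[
F_a(z)=\sum_{k\ge 1}B_k\prod_{j\ne k}\Bigl(1-\frac{z}{\beta_j}\Bigr)^{2},\qquad B_k=\frac{s_k}{\prod_{j\ne k}(1-\beta_k/\beta_j)^2},
\]
and then sets $f_a=F_a'$, so that $\int_{\beta_k}^{\beta_{k+1}}f_a=F_a(\beta_{k+1})-F_a(\beta_k)=a_k$ comes for free. Squaring the linear factors keeps the product estimates clean; the spacing $\beta_k>5^k$ gives $|B_k|\lesssim s_k\,5^{-2k}$, and a product bound together with Cauchy's inequality on $F_a$ places $f_a$ in the weighted class. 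This ``interpolate the antiderivative at the nodes, then differentiate'' trick is the idea missing from your plan.
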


\begin{proof} (A) is proved as in \S\ref{existchoice}, with small adaptations: $C((0,1])$ is replaced by  $D$,  ``$f\in L^1$'' by ``$f$ is meromorphic'' and $\int_0^1$ by $p.f.$ on $(0,1)$; we now require in (2) that, if $\tilde{V}_1$ has a meromorphic function inside, its representative  should be meromorphic. Finally we replace  $\int_0^x$ in \eqref{Defint} by $\int_{\beta_i}^x$ for any $\beta_i<x$ (cf. \eqref{eq:conda}).

For (B), if $w$ grows at most polynomially, $p.f.$ applies. In the opposite direction, we now construct, from a $\mathcal{P}_0$ based at infinity, an element of $Su$. 
\begin{Definition}\label{defg} {\rm 
As before $A=\{0,1\}^\NN$. Without loss of generality, we can restrict to the sequences with $a_1=0$. Let $s_k=\sum_{j=1}^{k}a_j,k\in \ZZ^+$. Consider the Cantor space of functions
\begin{equation}
  \label{eq:eqc1}
  \mathfrak{C}_1:=\left\{F_a: F_a(z)= \sum_{k=1}^{\infty}B_k\prod_{j\ne k}\(1-\frac{z}{\beta_j}\)^2,\,\ a\in A\right\};\  B_k:=\frac{s_k}{\prod_{j\ne k}\(1-\beta_k/\beta_j\)^2};
\end{equation}
where by construction $F_a(\beta_k)=s_k$. 
}\end{Definition}
\begin{Note}
 $F_a(\beta_k)$ will be our $\int_{1}^{\beta_{k}}f_a$, where  $f_a=F'_a$. 
\end{Note}

By \eqref{eq:defbetak} we have the following straightforward estimate \footnote{ For $x,y\ge 0$, the notation $x\lesssim y$ means $x\le C y$, where $C\ge 0$ does not depend on $x,y$, or relevant parameters. This notation is  standard.}. 
\begin{equation}
  \label{eq:eq34}
  |B_k|\lesssim s_k \beta_k^{-(2k-2)}\prod_{j=1}^{k-1}\beta_j^2 \le s_k\frac{\beta_{k-1}^2}{\beta_k^2}\le s_k5^{-2k}
\end{equation}
We first estimate the terms in the sum and the sum itself. By \eqref{eq:defbetak}, the sum $\sum_{k=1}^\infty |z|^2 |\beta_k|^{-2}$ converges and each infinite product in the sum converges (\cite{Lang}, see also the estimates below). It is clear that for a given $|z|$ all infinite products in \eqref{eq:eqc1}  are maximal when $z=-|z|$.

We have
\begin{equation}
  \label{eq:diform}
   \sum_{k=1}^{N}B_k\prod_{j\ne k}^{\infty}\(1+\frac{\rho}{\beta_j}\)^2= \prod_{j=1}^{\infty}\(1+\frac{\rho}{\beta_j}\)^2 \sum_{k=1}^{N}\frac{B_k}{(1+\rho/\beta_k)^2}\le  \prod_{j=1}^{\infty}\(1+\frac{\rho}{\beta_j}\)^2 \sum_{k=1}^{N}B_k
\end{equation}
which converges by  the assumption on $\beta_k$,  \eqref{eq:eq34}, and the convergence of $\sum_{j=1}^\infty |z|^2 |\beta_k|^{-2}$. We then also have
\begin{multline}
  \label{eq:prodest2}
   \sum_{k=1}^{\infty}B_k\prod_{j\ne k}^{\infty}\(1+\frac{\rho}{\beta_j}\)^2\le  \prod_{j=1}^{\infty}\(1+\frac{\rho}{\beta_j}\)^2 \sum_{k=1}^{\infty}B_k\\\lesssim  \prod_{j=1}^{\infty}\(1+\frac{\rho}{\beta_j}\)^2=\prod_{\beta_j<\rho}^{\infty}\(1+\frac{\rho}{\beta_j}\)^2\prod_{\beta_j\ge\rho}^{\infty}\(1+\frac{\rho}{\beta_j}\)^2\lesssim 4^M\rho^{2M}\le (4\rho^2)^{G^{-1}(\rho)}
\end{multline}
where $M$ is the largest $j$ s.t. $\beta_j<\rho$ for $j\le M$ and  we used $(1+x)^2<4x^2$ for $x>1$. The inequality implies, in particular that  {\em $f$ is entire.}

To estimate $f_a$, for $|z|\le \rho$ we simply use Cauchy's formula on a circle of radius $2\rho$:
\begin{equation}
  \label{eq:deri}
  |F_a'(z)|=\left|\frac{1}{2\pi i}\oint_{|s|=2\rho}\frac{F_a(s)}{(s-z)^2}\right|\lesssim\rho^{2g(\rho)-1}= W(\rho)\Rightarrow f_a(1/t)t^{-2}\in w^{\Join}
\end{equation}
Since $F_a(\beta_k)=s_k$, with $f_1(t)=f(1/t)t^{-2}$ and $\alpha_k:=1/\beta_k$ we get
    \begin{equation}
      \label{eq:choicealphak}
\forall k\in \ZZ^+ \Rightarrow \int_{\alpha_{k}}^{\alpha_{k+1}}f_1=(s_{k+1}-s_{k})=a_{k}
    \end{equation}
the desired result. 
\end{proof}

\section*{Acknowledgments} This research was partially supported by NSF DMS Grant 1108794 (OC), an Ohio State University  
Presidential Research Grant and by the John Templeton Foundation\footnote{The opinions expressed here are those of the author and do  
not necessarily reflect the views of the John Templeton Foundation} grant  
ID \#36297 (HF). The authors are grateful to A. Kechris for his very useful suggestions.

\end{document}